\newtheorem{theorem}{Theorem}[section]
\theoremstyle{plain}
\newtheorem{lemma}[theorem]{Lemma}
\newtheorem{prop}[theorem]{Proposition}
\numberwithin{equation}{section}
\def\Xxi{{\mathfrak X}_\zeta}
\def\Xx{{\mathfrak X}}
\def\One{{1\!\!1}}
\def\Lip{{\rm Lip}}
\def\Hk{{\mathcal H}}
\def\wtil{\widetilde}
\def\half{\frac{1}{2}}
\newcommand{\gam}{\gamma}
\newcommand{\om}{\omega}
\def\Om{\Omega}
\newcommand{\Ups}{\Upsilon}
\newcommand{\Gam}{\Gamma}
\newcommand{\sig}{\sigma}
\newcommand{\R}{{\mathbb R}}
\newcommand{\Z}{{\mathbb Z}}
\newcommand{\C}{{\mathbb C}}
\def\N{{\mathbb N}}
\newcommand{\Nat}{{\mathbb N}}
\def\A{{\mathcal A}}
\def\Sf{{\sf S}}
\def\be{\begin{equation}}
\def\ee{\end{equation}}
\newcommand{\Ek}{{\mathcal E}}
\newcommand{\Fk}{{\mathcal F}}
\newcommand{\eps}{{\varepsilon}}
\def\ov{\overline}
\newcommand{\const}{{\rm const}}
\def\card{{\rm card}}
\def\ve1{\vec{1}}
\def\Ak{{\mathcal A}}
\def\Bu{B}
\begin{document}

\title[On ergodic averages for parabolic product flows]{On  ergodic averages for parabolic product flows}

\author{Alexander I. Bufetov}
\address{Alexander I. Bufetov\\ 
Aix-Marseille Universit{\'e}, CNRS, Centrale Marseille, I2M, UMR 7373}
\address{39 rue F. Joliot Curie Marseille France }
\address{Steklov  Mathematical Institute of RAS, Moscow}
\address{Institute for Information Transmission Problems, Moscow}
\address{National Research University Higher School of Economics, Moscow}
\address{The Chebyshev Laboratory, Saint-Petersburg State University, Saint-Petersburg, Russia}

\email{alexander.bufetov@univ-amu.fr, bufetov@mi.ras.ru}
\author{Boris Solomyak }
\address{Boris Solomyak\\ Department of Mathematics,
Bar-Ilan University, Ramat-Gan, Israel}
\email{bsolom3@gmail.com}

\begin{abstract}  We consider a direct product of a suspension flow over a substitution dynamical system and an arbitrary ergodic flow and give quantitative  estimates for the speed of convergence for ergodic integrals of such systems.
Our argument relies on new uniform estimates of the spectral measure for suspension flows over substitution dynamical systems. The paper answers a question by Jon Chaika.
\end{abstract}

%\date{\today}

\keywords{Substitution dynamical system; spectral measure; Hoelder continuity.}

\maketitle

\thispagestyle{empty}

\section{Introduction}

Parabolic dynamical systems are characterized by a ``slow'' chaotic behavior: whereas for hyperbolic systems nearby trajectories diverge exponentially, for parabolic ones they diverge polynomially in time. Classical examples include the horocycle flows and translation flows on flat surfaces of higher genus. Substitution dynamical systems and suspension flows over them also fall into this category. Due to their simple-to-describe combinatorial framework and many connections, e.g.\ with number theory and automata theory, they have  provided a ``testing ground'' for new methods. Their spectral theory has been actively studied, but many natural questions remain open.

We refer the reader to \cite{Queff,Siegel} for a detailed background, but recall the basic definitions briefly.
Let $\Ak=\{1,\ldots,m\}$ be a finite alphabet; we denote by $\Ak^+$ the set of finite (non-empty) words in $\Ak$.  A {\em substitution} is a map $\zeta:\, \Ak \to \Ak^+$, which is extended to an action on $\Ak^+$ and $\Ak^\N$ by concatenation. (Using a different language, this is a morphism of a free semigroup with $\Ak$ being a set of free generators.)
The {\em substitution space}, denoted $X_\zeta$, is a subset of $\Ak^\Z$ consisting of all two-sided infinite sequences $x$ with the property that for every $n\in N$, the word, or block, $x[-n,n]$ occurs as a subword in $\zeta^k(a)$ for some $k\in \N$ and $a\in \Ak$. It is clearly closed (in the discrete  product topology) and shift-invariant; thus we obtain a topological {\em substitution dynamical system} $(X_\zeta, T_\zeta)$, where $T_\zeta$ denotes the left shift restricted to $X_\zeta$. The {\em substitution matrix} is defined by 
$$
\Sf_\zeta (i,j) = \mbox{number of symbols}\ i\ \mbox{in the word}\ \zeta(j).
$$
This is a non-negative integer $m\times m$  matrix, which provides the  {\em abelianization} of the free semigroup morphism $\zeta$. Assume that $\zeta$ is {\em primitive}, that is,
some power of $\Sf_\zeta$ has only positive emtries. In this case the $\Z$-action  $(X_\zeta,T_\zeta)$  is minimal and uniquely ergodic, with a unique invariant Borel probability measure $\mu$. We also assume that $\zeta$ is {\em aperiodic}, i.e.\ the system has no periodic points, excluding the trivial case of $X_\zeta$ finite. Denote by $\theta_j$, $j\ge 1$, the eigenvalues of $\Sf_\zeta$ ordered by magnitude:
$$
\theta_1 > |\theta_2| \ge \ldots
$$
The famous ``Pisot substitution conjecture'' asserts that if $|\theta_2|<1$, then the measure-preserving system has pure discrete spectrum. (This condition is equivalent to $\theta_1$ being a Pisot number and the characteristic polynomial of $\Sf_\zeta$ being irreducible.)
This is known only in the two-symbol case \cite{BD,HS}, although there has been a lot of progress recently, see \cite{ABBLS}. In any case, such substitution systems have a large discrete component: they have a factor which is an irrational translation on an $(m-1)$-dimensional torus. %A complete description of the group of eigenvalues is given by \cite{FMN}.

Along with the substitution $\Z$-action, it is natural to study suspension flows over them. We only consider piecewise-constant roof functions. More precisely, for  a strictly positive vector $\vec{s} = (s_1,\ldots,s_m)$ we consider the suspension flow over $T_\zeta$, with the piecewise-constant roof function, equal to $s_j$ on the cylinder set $[j]$. The resulting space will be denoted by $\Xxi^{\vec{s}}$, the unique invariant measure for our suspension flow by ${\widetilde \mu}$  and the flow
by $(\Xxi^{\vec{s}}, {\widetilde \mu}, h_t)$. 
We have, by definition,
$$
\Xxi^{\vec{s}} = \bigcup_{a\in \A}  \Xx_a,\ \ \ \mbox{where}\ \ \Xx_a= \{(x,t):\ x\in X_\zeta,\ x_0=a,\ 0 \le t \le s_a\}. 
$$
and this union is disjoint in measure. We call the system $(\Xxi^{\vec{s}}, {\widetilde \mu}, h_t)$ a {\em substitution $\R$-action}.
This flow can also be viewed as the translation action on a tiling space, with interval prototiles of length $s_j$. A special case of interest is when 
$\vec{s}$ is the Perron-Frobenius eigenvector for the transpose substitution matrix $\Sf^t_\zeta$; this corresponds to the self-similar tiling on the line, see \cite{SolTil}.

%Denote by $\wtil{\mu}$ the unique invariant probability measure for the suspension flow $(\Xxi^{\vec{s}},h_t)$. 

Sometimes results on spectral properties become simpler when we pass from the $\Z$-action to the $\R$-action. In particular, the condition ``$\theta_1$ is not Pisot'' is equivalent to the substitution $\R$-action being weakly mixing, in the self-similar case \cite{SolTil} and in the ``generic case'' (for Lebesgue-a.e.\ $\vec{s}$) \cite{CSa}, whereas the situation for substitution $\Z$-actions is much more complicated \cite{solomyak,FMN}. In this paper we continue the analysis of the generically weak-mixing case, assuming $|\theta_2|>1$, which was started in \cite{BuSo1}. Note that the ``borderline'' case $|\theta_2|=1$ is more subtle \cite{BBH}. We should also note that when the characteristic polynomial of $\Sf_\zeta$ is reducible, e.g.\ when $\theta_1$ is an integer, the type of spectrum is determined not just by the matrix, but also by the order of the letters in the words $\zeta(j)$, see e.g.\ \cite{Queff}.

%Let $\Sf=\Sf_\zeta$ be the substitution matrix of $\zeta$. We focus on the case when the  
%characteristic polynomial $P_{\Sf}(t)$ is irreducible and the second eigenvalue  satisfies $|\theta_2|>1$
%(the case when the second eigenvalue lies on the unit circle is very different, cf. e.g. in \cite{BBH}).
%In this case it is known that for Lebesgue-a.e.\ $\vec{s}$, the suspension flow $(\Xxi^{\vec{s}},h_t)$ is weakly mixing, equivalently, it has no non-trivial eigenvalues. 

As is well-known, weak-mixing of a system is equivalent to the  ergodicity of  the  product flow $h_t \times H_t$, where $H_t$ is an arbitrary measure-preserving ergodic flow defined  on a standard probability space $(Y, \nu)$.
We thus have 
\be \label{converge}
\lim_{R\to \infty} \frac{1}{R} \int_0^R \langle (f\otimes g)\circ(h_t\times H_t), f\otimes g\rangle\,dt=0 \  \mbox{for all}\ f\in L^2(\Xxi^{\vec{s}}, {\widetilde \mu}), g \in L^2_0(Y,\nu).
\ee
%Further,  ergodicity of a flow $(Y, \Phi_t,\nu)$ is equivalent to the condition 
%\be \label{converge}
%\lim_{R\to \infty} \frac{1}{R} \int_0^R \langle F\circ \Phi_t, G\rangle\,dt= 0\ \ \ \mbox{for all}\ F,G\in L^2_0(Y,\nu).
%\ee
Our aim in this paper is to  give power estimates for the speed of convergence in (\ref{converge}).

On $\Xxi^{\vec{s}}$ we consider Lipschitz ``cylindrical functions," namely, functions of the form
$$
f(x,t) = \psi_{x_0},\ \ 0 \le t\le  s_{x_0},
$$
where $\psi_j \in \Lip[0,s_j]$, $j\le m$. Denote
$$
\|f\|_L := \max_{a\in \Ak} \|\psi_a\|_L,
$$
where $\|\cdot\|$ is the Lipschitz norm.

Let $f\in L^2(\Xxi^{\vec{s}},\wtil{\mu})$. By the Spectral Theorem for measure-preserving flows, there is a finite positive Borel measure $\sig_f$ on $\R$ such that
$$
\widehat{\sig}_f(-t) = \int_{-\infty}^\infty e^{2 \pi i\om t}\,d\sig_f(\om) = \langle f\circ h_t, f\rangle\ \ \ \mbox{for}\ t\in \R,
$$
where $\langle \cdot,\cdot \rangle$ denotes the inner product in $L^2$. For functions $f$ and $g$ set $(f\otimes g)(x,y):= f(x)g(y)$. 
Without loss of generality, we can restrict ourselves to roof vectors from the  simplex $\Delta^{m-1} = \{\vec{s}\in \R^m_+:\ \sum_{j=1}^m s_j =1\}$.

\begin{theorem} \label{th-prod}
Let $\zeta$ be a primitive aperiodic substitution on $\Ak=\{1,\ldots,m\}$, with substitution matrix $\Sf$.  Suppose that the characteristic polynomial of $\Sf$ is irreducible and the second eigenvalue  satisfies $|\theta_2|>1$. Then
 there exists a constant $\alpha>0$, depending only on the substitution $\zeta$, such that for Lebesgue-almost every $\vec{s}\in  \Delta^{m-1}$, for every   Lip-cylindrical function $f$, with $\int f\,d\wtil{\mu} =0$, 
 and for any  ergodic flow $(Y, H_t,\nu)$ and a function $g\in L^2_0(Y,\nu)$, 
 \be\label{prodest1}
\left| \int_0^R \langle (f\otimes g)\circ(h_t\times H_t), f\otimes g\rangle\,dt\right| \le CR^{1-\alpha},\ \ R>0,
 \ee
%\be \label{prodest1}
%|\langle S_R(f\otimes g)\,,\,f\otimes g\rangle| \le CR^{1-\alpha},\ \ R>0,
%\ee
where $C=C(\vec{s},\|f\|_L, \|g\|_2)>0$.
\end{theorem}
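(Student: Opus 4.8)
The plan is to reduce the correlation integral for the product flow to a statement purely about the spectral measure $\sigma_f$ of the substitution $\R$-action, and then to prove a quantitative (power) bound on $\sigma_f$ of small intervals — a uniform local H\"older estimate — for Lebesgue-a.e.\ roof vector $\vec s$. The first step is standard and essentially the same identity that underlies the equivalence of weak mixing with ergodicity of products: writing $\tau$ for the maximal spectral type of $g$ under $H_t$ (a probability measure on $\R$ with $\tau(\{0\})=0$ since $g\in L^2_0$), one has
\be
\int_0^R \langle (f\otimes g)\circ(h_t\times H_t), f\otimes g\rangle\,dt = \int_0^R \widehat{\sigma}_f(-t)\,\widehat{\tau}(-t)\,dt = \int_{\R}\int_{\R} \left(\int_0^R e^{2\pi i(\omega+\eta)t}\,dt\right) d\sigma_f(\omega)\,d\tau(\eta).
\ee
The inner $t$-integral is a Dirichlet-type kernel of modulus $\le \min(R, \pi^{-1}|\omega+\eta|^{-1})$, so the double integral is controlled by $\int\int \min(R,|\omega+\eta|^{-1})\,d\sigma_f(\omega)\,d\tau(\eta)$. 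Splitting at scale $|\omega+\eta|\le 1/R$ and doing a dyadic decomposition of the complementary range, everything is bounded by $\sup_{\eta\in\R}\, \sup_{r\le 1} \big(\sigma_f(B(\eta,r))/r\big)$ times a logarithmic factor in $R$, plus the contribution of $\sigma_f$ near $-\eta$ at scale $1/R$. Hence it suffices to establish a bound of the form $\sigma_f(B(\omega,r)) \le C(\vec s,\|f\|_L)\, r^{\alpha}$ uniformly in $\omega\in\R$ and $r\in(0,1]$, with $\alpha>0$ depending only on $\zeta$; feeding $r=R^{-1}$ into the dyadic estimate then yields the exponent $1-\alpha$ (absorbing the harmless $\log R$ into a slightly smaller $\alpha$).

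The heart of the matter is therefore the uniform local H\"older continuity of $\sigma_f$. This is precisely where the hypotheses $|\theta_2|>1$ and irreducibility of the characteristic polynomial enter, and where I would invoke (and, if needed, sharpen) the machinery of \cite{BuSo1}. The strategy there is to represent $\widehat{\sigma}_f$ (or rather the ``twisted Birkhoff sums'' whose growth governs $\sigma_f$) via the substitution structure: for a Lip-cylindrical $f$ with $\int f\,d\wtil\mu=0$, the relevant exponential sums $\sum_{k<N} f(h_t x)\,e^{-2\pi i\omega(\text{return times})}$ satisfy a renormalization identity driven by the matrix cocycle $\Sf(\omega)$, a deformation of $\Sf_\zeta$ along the torus directions parametrized by $\vec s$. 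The key input is a Diophantine/transversality statement: for Lebesgue-a.e.\ $\vec s\in\Delta^{m-1}$ the products of these matrix cocycles contract at a definite exponential rate away from the Perron direction — quantified using that the second Lyapunov exponent is strictly positive (which is where $|\theta_2|>1$ is used) — and this forces $\sigma_f(B(\omega,r))\lesssim r^\alpha$. The uniformity in $\omega$ comes from the fact that the estimate is obtained at the level of the cocycle over all of $\R$ simultaneously; the uniformity in $f$ (dependence only on $\|f\|_L$) comes from tracking the Lipschitz norm through the finitely many ``building-block'' estimates.

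I expect the main obstacle to be exactly this uniform H\"older bound — specifically, obtaining the estimate simultaneously for all frequencies $\omega\in\R$ with a single exponent $\alpha$ depending only on $\zeta$, and with the constant depending on $f$ only through $\|f\|_L$. Pointwise (for fixed $\omega$, a.e.\ $\omega$) H\"older continuity is comparatively soft; the uniform version requires a careful large-deviation / Borel--Cantelli argument over a net of frequencies together with an a priori trivial bound $\widehat{\sigma}_f$ to interpolate between net points, and it requires the Diophantine properties of $\vec s$ to be exploited uniformly. Once the uniform bound $\sigma_f(B(\omega,r))\le C(\vec s,\|f\|_L)r^\alpha$ is in hand, the remainder of the proof is the routine harmonic-analysis reduction sketched above: symmetrize in $f\otimes g$, pass to spectral measures, bound the Dirichlet kernel, decompose dyadically, and choose $r=1/R$, which gives \eqref{prodest1} with $C=C(\vec s,\|f\|_L,\|g\|_2)$ after absorbing the logarithm into $\alpha$.
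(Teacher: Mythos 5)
Your reduction of \eqref{prodest1} to a uniform local H\"older bound for $\sigma_f$ is correct and is a genuinely different route from the paper's. You expand both correlations against their spectral measures, bound the resulting Dirichlet-type kernel by $\min(R,|\om+\eta|^{-1})$, and sum dyadically; the paper instead writes the left side of \eqref{prodest1} as $\int_0^R\widehat\sigma_f(-\zeta)\widehat\sigma_g(-\zeta)\,d\zeta$ and applies Cauchy--Buniakovsky--Schwarz together with Strichartz's lemma \cite[Cor.~5.2]{Str90}, which converts $\sigma_f([\om-r,\om+r])\le Cr^\gam$ into $\sup_y\int_{y-R}^{y+R}|\widehat\sigma_f|^2\le CR^{1-\gam}$. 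Your route is more self-contained and in fact yields the exponent $1-\gam$ where the paper's gives $1-\gam/2$ (irrelevant for the statement, which only asserts some $\alpha>0$). One intermediate sentence of yours --- bounding the dyadic sum by $\sup_r\sigma_f(B(\eta,r))/r$ times a logarithm --- is not right as stated, since that supremum need not be finite under a H\"older bound with exponent $<1$; but your final formulation, feeding $\sigma_f(B(\om,r))\le Cr^\alpha$ into the dyadic sum, is correct.

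The genuine gap is in the key input. The uniform estimate $\sigma_f([\om-r,\om+r])\le C(\|f\|_L)\,r^\alpha$ for \emph{all} $\om\in\R$ and all $0<r\le r_0(\vec{s})$ is not available in \cite{BuSo1} (which gives it only uniformly on compact subsets of $\R\setminus\{0\}$); it is Theorem~\ref{th-holder1}, the main new result of this paper, and your sketch of how to obtain it misses both places where the new work happens. First, the uniformity in $\om$ for a.e.\ $\vec{s}$ is not obtained by a net-plus-Borel--Cantelli argument over frequencies: the exceptional set lives in the roof-vector parameter, and one needs a quantitative Erd\H{o}s--Kahane-type covering argument (Proposition~\ref{prop-EKvar}) that handles every frequency window $[B^{-1},B]$ simultaneously and yields the bound for $0<r<r_0|\om|^{Z}$ (Proposition~\ref{prop-holder2}). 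Second, that estimate degenerates as $\om\to0$, and a completely separate argument is needed there: this is the only place the hypothesis $\int f\,d\wtil{\mu}=0$ enters (otherwise $\sigma_f$ has an atom at $0$ and no H\"older bound can hold), via Adamczewski's symbolic discrepancy bound \cite{Adam}, which gives $\sigma_f([-r,r])=O(r^{2-2\beta})$ up to logarithmic factors with $\beta=\log_\theta|\theta_2|<1$ (Proposition~\ref{prop-zero}); the two regimes must then be glued. As written, your proposal assumes precisely the statement whose proof constitutes the bulk of the paper, so it does not close.
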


In order to prove Theorem~\ref{th-prod}, we need the following strengthening of Theorem 4.1 in \cite{BuSo1}.

\begin{theorem} \label{th-holder1} Let $\zeta$ be a primitive aperiodic substitution on $\Ak$, satisfying the assumptions of Theorem~\ref{th-prod}. Then
 there exists a constant $\gam>0$, depending only on the substitution $\zeta$, such that for Lebesgue-almost every $\vec{s}\in  \Delta^{m-1}$ there exists $r_0=r_0(\vec{s})>0$, such that for every Lip-cylindrical function $f$, with $\int f\,d\wtil{\mu} =0$, 
\be \label{holder1}
\sig_f ([\om-r,\om+r]) \le C r^\gam,\ \ \mbox{for all}\ \om\in \R\ \mbox{and}\  0 < r \le r_0.
\ee
Here $\sig_f$ is the spectral measure of $f$ corresponding to the suspension flow $(\Xxi^{\vec{s}},h_t)$ and $C>0$ depends only on $\|f\|_L$.
\end{theorem}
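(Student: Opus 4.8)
\medskip
\noindent\emph{Proof plan.} The plan is to follow the strategy behind Theorem~4.1 of \cite{BuSo1} — reduce the spectral estimate to a power bound on twisted Birkhoff integrals, express those through the spectral cocycle of $\zeta$ over the toral endomorphism $\Sf_\zeta^{\,t}$, and control the growth of the cocycle — the genuinely new ingredient being that the cocycle bound must be made \emph{uniform in the frequency} $\om$ (with the threshold depending only on $\vec{s}$), which is exactly the difference between Theorem~4.1 of \cite{BuSo1} and Theorem~\ref{th-holder1}. For the reduction, set $S^{(\om)}_R f(x)=\int_0^R e^{-2\pi i\om t}f(h_t x)\,dt$. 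The Spectral Theorem gives
\[
\bigl\|S^{(\om)}_R f\bigr\|_{L^2(\Xxi^{\vec{s}},\wtil{\mu})}^2=\int_\R \frac{\sin^2\!\bigl(\pi(\xi-\om)R\bigr)}{\pi^2(\xi-\om)^2}\,d\sig_f(\xi),
\]
and since the integrand is $\ge cR^2$ for $|\xi-\om|\le 1/(2R)$, we get $\sig_f([\om-r,\om+r])\le Cr^2\,\bigl\|S^{(\om)}_{1/(2r)}f\bigr\|_{L^2}^2$. Hence \eqref{holder1} (with $\gam=2\gam'$) follows once we produce $\gam'>0$ depending only on $\zeta$ such that, for a.e.\ $\vec{s}\in\Delta^{m-1}$, there is $R_0(\vec{s})$ with $\sup_{x}|S^{(\om)}_R f(x)|\le C(\|f\|_L)\,R^{1-\gam'}$ for all $\om\in\R$ and $R\ge R_0(\vec{s})$.

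To express $S^{(\om)}_R f$ through the cocycle, write $\zeta(a)=u_1\cdots u_{\ell_a}$, let $\vec{p}_i\in\Z^m$ be the abelianization of the prefix $u_1\cdots u_{i-1}$, and put $\Mk_\zeta(\vec{z})_{a,b}=\sum_{i:\,u_i=b}e^{-2\pi i\langle\vec{p}_i,\vec{z}\rangle}$; this matrix function is $\Z^m$-periodic and $\Mk_\zeta(\vec{0})=\Sf_\zeta$. By recognizability of the aperiodic primitive substitution $\zeta$, the twisted integral of a Lip-cylindrical $f$ over a $\zeta^n$-super-tile of type $a$, read from its base, is the $a$-th coordinate of
\[
\vec{\Phi}^{(\om)}_n=\Mk_\zeta(A^{n-1}\vec{z}_0)\cdots\Mk_\zeta(A\vec{z}_0)\,\Mk_\zeta(\vec{z}_0)\,\vec{\Phi}^{(\om)}_0,\qquad A=\Sf_\zeta^{\,t}\text{ on }\T^m,\quad\vec{z}_0=\om\vec{s}\bmod\Z^m,
\]
with $\vec{\Phi}^{(\om)}_0=\bigl(\int_0^{s_a}e^{-2\pi i\om t}\psi_a(t)\,dt\bigr)_a$. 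A greedy Zeckendorf-type descent through the hierarchical structure of $X_\zeta$ decomposes an orbit segment of length $R\asymp\theta_1^{n}$ into a bounded number of $\zeta^k$-super-tiles at each level $k\le n$ plus a remainder of bounded length; attaching a unimodular phase to each piece gives $\sup_x|S^{(\om)}_R f(x)|\le C\sum_{k\le n}\|\vec{\Phi}^{(\om)}_k\|+C$, and as the resulting geometric series is dominated by its top term it suffices to prove
\[
\sup_{\om\in\R}\|\vec{\Phi}^{(\om)}_n\|\le C\,\theta_1^{(1-\gam')n}\qquad\text{for almost every }\vec{s}\text{ and all }n\ge n_0(\vec{s}).
\]

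This last bound is the heart of the matter. Elementary ingredients are: the a priori bound $|\vec{\Phi}^{(\om)}_n|\le(\Sf_\zeta^{\,t})^n|\vec{\Phi}^{(\om)}_0|$, of order $\theta_1^n$; a square-root cancellation coming from the $L^2(\T^m,d\vec{z})$-orthogonality of the prefix phases (all prefixes of $\zeta^n(a)$ having pairwise distinct abelianizations), namely $\int_{\T^m}\|\Mk_\zeta(A^{n-1}\vec{z})\cdots\Mk_\zeta(\vec{z})\|_{\mathrm{HS}}^2\,d\vec{z}\asymp\theta_1^n$, which by Furstenberg--Kesten forces the top Lyapunov exponent of the cocycle over $(\T^m,\Sf_\zeta^{\,t},\text{Lebesgue})$ to be at most $\tfrac12\log\theta_1<\log\theta_1$; and the vanishing, at the resonance $\vec{z}_0=\vec{0}$, of the $\theta_1^n$-component of $\vec{\Phi}^{(0)}_n=(\Sf_\zeta^{\,t})^n\vec{\Phi}^{(0)}_0$ — which is a multiple of $\int f\,d\wtil{\mu}$ — so that $\|\vec{\Phi}^{(0)}_n\|=O(|\theta_2|^n)$ up to a polynomial factor. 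Sharpening the Lyapunov bound into a large-deviation / small-exceptional-set estimate for the cocycle (this is where the hypotheses $|\theta_2|>1$ and irreducibility of the characteristic polynomial enter, as in \cite{BuSo1}, placing us in the weak-mixing regime where Hölder continuity is possible) yields $\|\Mk_\zeta(A^{n-1}\vec{z})\cdots\Mk_\zeta(\vec{z})\|\le C_\eps\,\theta_1^{(1-\gam'+\eps)n}$ for Lebesgue-a.e.\ $\vec{z}$, with some fixed $\gam'>0$; via the two preceding steps this gives \eqref{holder1} for a.e.\ $\om$ — in substance Theorem~4.1 of \cite{BuSo1}. To reach \emph{every} $\om$ I would argue as follows. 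Since $\psi_a\in\Lip$, Riemann--Lebesgue gives $\|\vec{\Phi}^{(\om)}_0\|\le C/|\om|$, so the a priori bound already settles all $|\om|\ge\theta_1^{\gam'n}$; on the remaining bounded $\om$-interval, $\om\mapsto\vec{\Phi}^{(\om)}_n$ is a trigonometric polynomial of degree $\lesssim\theta_1^n$, hence (Bernstein) controlled by its values on a net of spacing $\asymp\theta_1^{-Cn}$. It then remains to bound, for each scale $n$, the Lebesgue measure of the set of $\vec{s}\in\Delta^{m-1}$ for which some net point $\om$ makes $\|\vec{\Phi}^{(\om)}_n\|$ exceed $\theta_1^{(1-\gam')n}$, and to show that this measure is summable in $n$; this combines the small-exceptional-set estimate for the cocycle (its bad locus $E\subset\T^m$ being Lebesgue-null) with Borel--Cantelli over $\vec{s}$, and fixes $R_0(\vec{s})$, while on the rare bad approaches one splits $\vec{\Phi}^{(\om)}_n$ at the first entry of $\{A^j\vec{z}_0\}$ into a neighbourhood of $E$ and uses the a priori bound on that short stretch. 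The one approach to $E$ that cannot be removed for any $\vec{s}$ — the line through $\vec{0}$ as $\om\to 0$ — is handled separately by splitting at the escape time of $\{A^j\vec{z}_0\}$ from a fixed small ball about $\vec{0}$ and invoking the $\int f\,d\wtil{\mu}=0$ decay above, provided $\gam'$ is chosen so that $\log|\theta_2|<(1-\gam')\log\theta_1$. I expect this passage from ``a.e.\ $\om$'' to ``all $\om$'', keeping the exceptional set of roof vectors null, to be the main obstacle — it is already delicate for $m=2$ and near the resonances of the spectral cocycle.
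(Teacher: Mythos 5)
Your route through the matrix ``spectral cocycle'' over the toral endomorphism $\Sf_\zeta^t$ is genuinely different from what the paper does, and it founders at exactly the step you yourself flag as the main obstacle. The paper never forms the matrix products $\Mk_\zeta(A^{n-1}\vec{z}_0)\cdots\Mk_\zeta(\vec{z}_0)$; it works with the scalar product bound of Proposition~\ref{prop-Dioph0}, in which the decay of the twisted integral is controlled by how often $\om|\zeta^n(v)|_{\vec{s}}=\om\sum_j b_j\theta_j^n$ stays away from $\Z$, for a single return word $v$. The whole difficulty is then a Diophantine one about the coefficient vector $(b_1,\dots,b_m)$, and it is resolved by the Erd\H{o}s--Kahane argument (Proposition~\ref{prop-EKvar}): the set of projectivized coefficient vectors for which, for \emph{some} $\om\in[B^{-1},B]$, the distances $\|\om\sum_j a_j\theta_j^n\|$ fall below $\rho$ too often has Hausdorff dimension that can be made arbitrarily small, \emph{uniformly in $B$}, because the integers $K_n$ are forced by a finite-order recursion (Lemma~\ref{lem-step}) except on a sparse set of times. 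Since the exceptional set is defined with an existential quantifier over $\om$ in each dyadic-type range, non-membership of $\vec{s}$ automatically gives the estimate for \emph{all} $\om$ at once, with threshold $r_0|\om|^Z$; this is how the paper gets uniformity in frequency without any net or Bernstein argument.

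The concrete gap in your plan is the passage from the Lyapunov/large-deviation statement for Lebesgue-a.e.\ $\vec{z}\in\T^m$ to the points you actually need, namely $\vec{z}_0=\om\vec{s}$. For fixed $\vec{s}$ the set $\{\om\vec{s}\bmod\Z^m:\om\in\R\}$ is a line, hence Lebesgue-null in $\T^m$, so knowing that the bad locus $E\subset\T^m$ is null (or even that a large-deviation bound holds off a null set) says nothing about whether almost every such line avoids it often enough; one needs a Hausdorff-dimension or quantitative covering estimate for $E$ that is good enough to intersect a.e.\ line in a small set --- which is precisely what the Erd\H{o}s--Kahane argument supplies and what Furstenberg--Kesten does not. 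Likewise, your Borel--Cantelli over a net of spacing $\theta_1^{-Cn}$ in $\om$ involves exponentially many net points per scale, so summability requires the measure of bad $\vec{s}$ for each fixed $\om$ to decay like $\theta_1^{-C'n}$ with $C'>C$; no such bound is derived, and the $L^2$-orthogonality/Lyapunov input only gives subexponential control. Your treatment of $\om$ near $0$ is in substance the paper's: there the mean-zero condition is exploited through Adamczewski's symbolic discrepancy bound $S_N^x(F)=O(N^{\beta}(\log N)^{\alpha})$ with $\beta=\log_\theta|\theta_2|<1$ (Theorem~\ref{th-Adam} and Proposition~\ref{prop-zero}), and the two regimes are glued by the trivial monotonicity $\sig_f([\om-r,\om+r])\le\sig_f([-|\om|-r,|\om|+r])$ when $r>r_0|\om|^Z$, which is simpler than your escape-time splitting. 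In short: the reduction to twisted Birkhoff integrals and the large-$|\om|$ a priori bound are sound and parallel to the paper, but the central uniform-in-$\om$, a.e.-in-$\vec{s}$ estimate is not established by your argument, and the missing ingredient is a dimension bound on the exceptional set of the type furnished by Proposition~\ref{prop-EKvar}.
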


The improvement upon theorem 4.1 in \cite{BuSo1} is that in (\ref{holder1}) our local H\"older estimates are {\bf  uniform} on the {\bf whole 
line}, while in \cite{BuSo1} we were only able to prove  our estimates to be uniform away from zero and infinity.
An estimate of the Hausdorff dimension of the exceptional set of suspension flows can also be given, cf. \cite[Theorem 4.2]{BuSo1}.

\section{ Proof of Theorem~\ref{th-prod} assuming Theorem~\ref{th-holder1}.}
By a result of Strichartz \cite[Cor.\,5.2]{Str90} we immediately obtain from (\ref{holder1}):
\be \label{stri}
\sup_y \sup_{R\ge 1} R^{\gam-1} \int_{y-R}^{y+R} |\widehat{\sig}_f(\zeta)|^2\,d\zeta \le C.
\ee
By the definition of spectral measures,
\begin{eqnarray*}
\int_0^R \widehat{\sigma}_f(-\zeta)\widehat{\sigma}_g(-\zeta)\,d\zeta & = & \int_0^R \left(\int_{\Xxi^{\vec{s}}} (f\circ h_t)\,\ov{f}\,d\wtil{\mu}\right)\left(\int_Y (g\circ H_t)\,\ov{g}\,d\nu\right)\\
& = & \int_0^R \int_{\Xxi^{\vec{s}}\times Y} \bigl((f\otimes g)\circ (h_t \times H_t)\bigr)\,(\ov{f\otimes g})\,d(\widetilde{\mu}\times \nu),
\end{eqnarray*}
which is exactly the expression in (\ref{prodest1}) under the absolute value sign. It remains to note that 
$$
\left| \int_0^R \widehat{\sigma}_f(-\zeta)\widehat{\sigma}_g(-\zeta)\,d\zeta \right| \le \left(\int_0^R |\widehat{\sig}_f(-\zeta)|^2\,d\zeta\right)^{1/2} \,\left(\int_0^R |\widehat{\sig}_g(-\zeta)|^2\,d\zeta\right)^{1/2} \le C^{1/2}R^{1-\frac{\gam}{2}}\|g\|_2,
$$
applying Cauchy--Buniakovsky--Schwarz, (\ref{stri}), and the simple bound $\|\widehat{\sigma}_g\|_\infty \le \|g\|^2_2$.
%$$
%\int_0^R |\widehat{\sig}_f(-\zeta)|^2\,d\zeta = \langle S_R(f\otimes \ov{f})\,,\,f\otimes \ov{f}\rangle,
%$$
%which yields the desired result for $f\otimes \ov{f}$. It remains to use the polarization identity.
\qed

\bigskip

The plan of the proof of Theorem~\ref{th-holder1} is as follows: we go through the proof of
\cite[Theorem 4.2]{BuSo1}, making it more quantitative, and obtain 

\begin{prop} \label{prop-holder2}
Let $\zeta$ be a primitive aperiodic substitution on $\Ak$, satisfying the assumptions of Theorem~\ref{th-prod}. Then there exist constants $\wtil{\gam}, Z>0$, depending only on the substitution $\zeta$, such that for Lebesgue-almost every $\vec{s}\in  \Delta^{m-1}$  there exists $r_0=r_0(\vec{s})$, such that 
%$$\min\{\min_{j\le m} |\langle \vec{e}_j, \vec{s}\rangle|, \min_{j\le m} s_j\} \ge \beta,$$ 
 for every Lip-cylindrical function $f$ and $\om\ne 0$,
\begin{equation} \label{hole1}
\sig_f([\om-r,\om+r]) \le C\cdot r^{\wtil{\gam}},\ \ \mbox{for}\ \  0 < r < r_0 |\om|^Z.
\end{equation} 
Here the constant $C=C(\|f\|_L)>0$ depends only on the Lip-norm of $f$.
\end{prop}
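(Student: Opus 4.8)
The plan is to follow the proof of \cite[Theorem~4.2]{BuSo1}, carried out with explicit constants; its backbone is the standard reduction of Hölder bounds for $\sig_f$ to power bounds for twisted ergodic integrals. For $x\in\Xxi^{\vec s}$ and $R>0$ put $S^{(x)}_R(f,\om):=\int_0^R f(h_\tau(x))\,e^{-2\pi i\om\tau}\,d\tau$. A Fejér-type kernel argument (as in \cite{BuSo1}) gives
\[
\sig_f([\om-r,\om+r])\ \le\ C\,r^2\,\sup_{x}\bigl|S^{(x)}_{1/r}(f,\om)\bigr|^2,
\]
so it suffices to prove $\sup_x|S^{(x)}_R(f,\om)|\le C(\|f\|_L)\,|\om|^{-A}\,R^{1-\eta}$ for some $\eta>0$ and $A\ge0$ depending only on $\zeta$, valid once $R$ exceeds a threshold that may grow with $1/|\om|$ and depend on $\vec s$; plugging this in with $R=1/r$ yields (\ref{hole1}) with $\wtil\gam$ a fixed fraction of $\eta$ and $Z$ absorbing the loss $|\om|^{-A}$. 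Since we may take $r_0(\vec s)\le1$ and $Z\ge1$, the interval $[\om-r,\om+r]$ stays away from $0$, so a possible atom of $\sig_f$ at the origin is irrelevant and no mean-zero assumption on $f$ is needed.

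The main step expresses $S^{(x)}_R(f,\om)$ through the spectral cocycle of $\zeta$. Fix $n$ with $\th_1^n\asymp R$. By linear recurrence of the primitive system, every $h_\tau$-orbit segment of length $R$ decomposes into $O(1)$ complete level-$n$ supertiles plus two boundary pieces, each of which is in turn split into complete level-$j$ supertiles ($j<n$) and a level-$0$ remainder; writing $\Phi^{(a)}_j(\om)$ for the twisted integral of $f$ over one level-$j$ supertile of type $a$, this gives
\[
\bigl|S^{(x)}_R(f,\om)\bigr|\ \le\ C\sum_{j=0}^n\bigl\|\Phi_j(\om)\bigr\|\ +\ C\|f\|_L,
\]
where
\[
\Phi_j(\om)\ =\ \Mk\bigl((\Sf^t)^{j-1}\vec s,\om\bigr)\cdots\Mk(\Sf^t\vec s,\om)\,\Mk(\vec s,\om)\,\Phi_0(\om),
\]
$\Mk(\vec t,\om)$ is the $m\times m$ matrix with entries $\Mk(\vec t,\om)(a,b)=\sum_{p:\,(\zeta(a))_p=b}\exp(-2\pi i\om\sum_{q<p}t_{(\zeta(a))_q})$ (so that $\Mk(\vec t,0)=\Sf^t$), and $\|\Phi_0(\om)\|\le m\min(1,\|f\|_L/|\om|)$. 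Everything is thus reduced to a bound on the norm of $\Mk((\Sf^t)^{n-1}\vec s,\om)\cdots\Mk(\vec s,\om)$.

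This is the heart of the matter and is where $|\th_2|>1$ and irreducibility of the characteristic polynomial enter. Split the product at a level $k_0=k_0(\om)\asymp\log(1/|\om|)$ (with $k_0=0$ when $|\om|\ge1$). The initial block $\Mk((\Sf^t)^{k_0-1}\vec s,\om)\cdots\Mk(\vec s,\om)$ — where the phases $\om\cdot(\text{partial sums of }(\Sf^t)^j\vec s)$ are still too small, as $\vec s$ varies over $\Delta^{m-1}$, for cancellation — is estimated trivially by $\le\const^{k_0}\lesssim|\om|^{-A_0}$. For the tail block one runs the quantitative Diophantine (Erdős–Kahane–type) estimate: as $\vec s$ ranges over $\Delta^{m-1}$ the vectors $(\Sf^t)^j\vec s$ spread in every direction, the transverse expansion rate $|\th_2|^j>1$ (furnished precisely by $|\th_2|>1$) being the decisive input, and consequently the set of $\vec s$ with $\|\Mk((\Sf^t)^{n-1}\vec s,\om)\cdots\Mk((\Sf^t)^{k_0}\vec s,\om)\|>\th_1^{(1-\eta')(n-k_0)}$ has Lebesgue measure decaying geometrically in $n-k_0$, with a rate and constant that can be taken \emph{independent of $\om$}. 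A Borel–Cantelli argument — over dyadic frequency bands and block lengths, using the renormalisation relating the scale $|\om|$ to $\th_1|\om|$ — then furnishes, for Lebesgue-a.e.\ $\vec s$, a threshold $n_0(\vec s)$ such that $\|\Mk((\Sf^t)^{n-1}\vec s,\om)\cdots\Mk((\Sf^t)^{k_0}\vec s,\om)\|\le C\th_1^{(1-\eta')(n-k_0)}$ whenever $n-k_0\ge n_0(\vec s)$.

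Assembling the three steps gives $\sum_{j\le n}\|\Phi_j(\om)\|\le C(\|f\|_L)\,|\om|^{-A}\,\th_1^{(1-\eta')n}$ once $n\ge k_0(\om)+n_0(\vec s)$, i.e.\ once $R\gtrsim\th_1^{n_0(\vec s)}|\om|^{-O(1)}$; feeding this into the Fejér-kernel inequality and shrinking $r_0(\vec s)$ suitably (to order $\th_1^{-n_0(\vec s)}$ times a power of the $\vec s$-dependent constant above) produces (\ref{hole1}), with $\wtil\gam$ a fixed fraction of $\eta'$ and $Z$ chosen to absorb $|\om|^{-A}$. The leftover regime — $\om$ bounded away from $0$ but inside the bounded range not reached by the $|\om|^{-O(1)}$ bookkeeping — is covered by \cite[Theorem~4.1]{BuSo1} made quantitative, together, for the ``large $\om$, moderate $r$'' corner, with the elementary remark that a Lip-cylindrical $f$ lies in the flow-Sobolev space $H^s$ for every $s<\tfrac12$, so $\sig_f(\{|\xi|>t\})\le C(\|f\|_L)\,t^{-2s}$. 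The main obstacle is precisely the upgrade from ``fixed $\om$'' to ``all $\om\ne0$'': in \cite{BuSo1} the geometric decay of the exceptional set of $\vec s$ was controlled only for $\om$ in a fixed compact subset of $(0,\infty)$, and what makes the uniform version possible — and also what forces the polynomial weakening recorded by the constraint $r<r_0|\om|^Z$ — is that the renormalisation forgets the scale of $\om$ after $O(\log|\om|)$ steps.
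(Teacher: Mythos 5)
Your overall architecture coincides with the paper's: reduce the H\"older bound for $\sig_f$ to a power bound for the twisted integrals $S_R^{(x)}(f,\om)$ (the paper's Lemma~\ref{lem-varr}), bound those by a product over renormalization scales, run an Erd\H{o}s--Kahane argument in the parameter $\vec s$, and pay for uniformity in $\om$ with a threshold of order $\log\max(|\om|,|\om|^{-1})$, which is exactly what produces the restriction $r<r_0|\om|^Z$. The differences in implementation are real but minor: the paper does not work with the full matrix cocycle $\Mk(\cdot,\om)$ but with the scalar quantities $\big\|\om\,|\zeta^k(v)|_{\vec s}\big\|$ attached to a return word $v$ (Proposition~\ref{prop-Dioph0}), and it controls the exceptional set of $\vec s$ by a covering/Hausdorff-dimension estimate (Proposition~\ref{prop-EKvar}) rather than by measure bounds plus Borel--Cantelli; also, the ``large $\om$, small $R$'' corner is handled by the trivial factor $\min\{1,|\om|^{-1}\}$ in \eqref{eq-matrest10} rather than by your Sobolev-tail remark. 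None of these variants is problematic in principle.

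The genuine gap is that the two steps carrying all the content are asserted rather than proved. First, you never explain how cancellation is extracted from a single cocycle matrix: the paper's device is precisely the return word $v$ occurring in every $\zeta(b)$, which forces each factor to contribute $\bigl(1-c_1\|\om|\zeta^k(v)|_{\vec s}\|^2\bigr)$; without some such mechanism the statement ``the transverse expansion rate $|\theta_2|^j>1$ is the decisive input'' does not by itself yield $\|\Mk(\cdots)\cdots\Mk(\cdots)\|\le\theta_1^{(1-\eta')(n-k_0)}$. Second, your key Diophantine claim is stated in a form that is false: the measure (or covering number) of the exceptional set of $\vec s$ at frequency scale $B\asymp\max(|\om|,|\om|^{-1})$ does \emph{not} decay geometrically in $n-k_0$ with constants independent of $\om$. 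In the actual argument the count of admissible integer sequences $K_1,\dots,K_N$ carries a prefactor $B^{m+1}$ (coming from the $O(B^m)$ choices of the initial block $K_1,\dots,K_m$ and the radius $2BC_\Theta|\theta_{m-q}|^{-N+m-1}$ of the covering balls), and absorbing this prefactor is exactly what forces the condition $N\ge N_0+\lfloor\Ups\log B\rfloor$ in the definition of $E_k(\Ups)$ and hence the exponent $Z=\Ups\log\theta$ in \eqref{hole1}. You acknowledge this mechanism in your closing sentence, but it contradicts the ``$\om$-independent'' estimate you rely on in the middle of the argument; as written, the Borel--Cantelli sum over dyadic frequency bands does not converge without building the $\log B$ delay into the event whose measure you are summing. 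So the skeleton is right, but the proof of Proposition~\ref{prop-EKvar} --- which is the entire point of this section of the paper --- is missing, and the surrogate statement you substitute for it would not survive the summation over frequency bands in the form you give it.
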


Note that here we do not have to assume $\int_X f\,d\wtil{\mu}=0$.
We will then ``glue'' this H\"older bound with the H\"older bound at $\om=0$ (which essentially follows from a result of Adamczewski \cite{Adam})
in the case when $f$ has mean zero.

%%%%%%%%%%%%%%%%%%%%%%%%%%%%%%%%%%%%%%%%%%%%%%%%%

\section{Twisted ergodic integrals and spectral measures}

 Let $(Y,\mu,h_y)$ be a measure-preserving flow. For $f\in L^2(Y,\mu),\ R>0$, $\om\in \R$, and $y\in Y$ consider the ``twisted Birkhoff integral''
$$
S_R^{(y)}(f,\om) = \int_0^R e^{-2\pi i \om t} f(h_t y)\,dt.
$$
Recall the following standard lemma; a proof may be found in \cite[Lemma 4.3]{BuSo1}. 

\begin{lemma} \label{lem-varr}
 Let $\Om(r)$ be a continuous increasing function on $[0,1)$, such that $\Om(0)=0$, and suppose that for some fixed $\om \in \R$, $R_0\ge 1$, $C_1>0$, and $f\in L^2(Y,\mu)$ we have
\be \label{eq-SR}
\sup_{y\in Y} |S^{(y)}_R(f,\om)| \le R \sqrt{C_1\Om(1/R)}\ \ \mbox{for}\ R\ge R_0,
\ee
Then
\be \label{eq-Hof12}
\sig_f([\om-r,\om+r]) \le \frac{\pi^2 C_1}{4} \, \Om(2r) \ \ \mbox{for all}\ r \le (2R_0)^{-1}.
\ee
\end{lemma}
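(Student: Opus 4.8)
The plan is to deduce (\ref{eq-Hof12}) from (\ref{eq-SR}) by the classical device of averaging $|S^{(y)}_R(f,\om)|^2$ over $Y$ and recognizing a Fej\'er kernel integrated against the spectral measure. First I would expand the square, apply Fubini, and use $h_t$-invariance of $\mu$ together with the spectral theorem, writing $\langle f\circ h_t, f\circ h_s\rangle = \langle f\circ h_{t-s}, f\rangle = \int_\R e^{2\pi i\xi(t-s)}\,d\sig_f(\xi)$; this gives
\be
\int_Y |S^{(y)}_R(f,\om)|^2\,d\mu(y) \;=\; \int_0^R\!\!\int_0^R e^{-2\pi i\om(t-s)}\langle f\circ h_{t-s}, f\rangle\,ds\,dt \;=\; \int_\R \Bigl|\int_0^R e^{2\pi i(\xi-\om)t}\,dt\Bigr|^2 d\sig_f(\xi).
\ee
The inner integral has modulus squared equal to the Fej\'er-type kernel $F_R(u)=\sin^2(\pi uR)/(\pi u)^2$, so the right-hand side is $\int_\R F_R(\xi-\om)\,d\sig_f(\xi)$. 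The Fubini interchange is legitimate since $\sig_f$ is a finite measure and all integrands are bounded on the compact domain $[0,R]^2$.

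The next step is to bound $F_R$ from below on a short interval centred at $\om$. Using the elementary inequality $\sin x\ge \frac{2}{\pi}x$ for $0\le x\le \pi/2$, for $|\xi-\om|\le \frac{1}{2R}$ one gets $\sin^2(\pi(\xi-\om)R)\ge 4(\xi-\om)^2R^2$, hence $F_R(\xi-\om)\ge 4R^2/\pi^2$. Since $F_R\ge 0$, discarding everything outside $[\om-\frac{1}{2R},\om+\frac{1}{2R}]$ yields
\be
\int_Y |S^{(y)}_R(f,\om)|^2\,d\mu(y) \;\ge\; \frac{4R^2}{\pi^2}\,\sig_f\!\Bigl(\bigl[\om-\tfrac{1}{2R},\,\om+\tfrac{1}{2R}\bigr]\Bigr).
\ee
On the other hand, since $\mu$ is a probability measure, hypothesis (\ref{eq-SR}) gives $\int_Y |S^{(y)}_R(f,\om)|^2\,d\mu(y)\le \sup_y |S^{(y)}_R(f,\om)|^2 \le R^2 C_1\,\Om(1/R)$ for $R\ge R_0$.

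Comparing the two displays and cancelling $R^2$ gives $\sig_f([\om-\frac{1}{2R},\om+\frac{1}{2R}]) \le \frac{\pi^2 C_1}{4}\,\Om(1/R)$ for all $R\ge R_0$; putting $r=\frac{1}{2R}$, so that the constraint $R\ge R_0$ becomes $r\le (2R_0)^{-1}$ and $\Om(1/R)=\Om(2r)$, produces exactly (\ref{eq-Hof12}). I do not expect a genuine obstacle here: the argument is essentially a one-page computation, and the only points needing care are the harmless Fubini step and keeping track of the constant $\pi^2/4$ coming from the bound $\sin x\ge \frac{2}{\pi}x$. Note that the continuity and monotonicity of $\Om$ are not even used in this deduction; they are only convenient when the lemma is later applied with a concrete $\Om$.
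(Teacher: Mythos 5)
Your argument is correct and is exactly the standard Fej\'er-kernel proof that the paper invokes by reference to \cite[Lemma 4.3]{BuSo1}: average $|S_R^{(y)}|^2$ over $Y$, identify $\int_\R \sin^2(\pi(\xi-\om)R)/(\pi(\xi-\om))^2\,d\sig_f(\xi)$, and bound the kernel below by $4R^2/\pi^2$ on $|\xi-\om|\le 1/(2R)$. All constants and the substitution $r=1/(2R)$ check out, so nothing further is needed.
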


%\begin{lemma} \label{lem-varr} Suppose that for some fixed $\om \in \R$, $R_0>0$, and $\alpha \in (0,1)$ we have
%$$
%\sup_{y\in Y} |S_R^{(y)}(f,\om)|\le C_1R^\alpha\ \ \mbox{for all}\ R\ge R_0.
%$$
%Then 
%$$
%\sig_f([\om-r,\om+r]) \le \pi^2 C_1^2 r^{2(1-\alpha)}\ \ \mbox{for all}\ r \le (2R_0)^{-1}.
%$$
%\end{lemma}

Recall that our test functions depend only on the cylinder set $X_a$ and the height $t$. More precisely, given some functions
 $\psi_a\in \Lip([0,s_a])$, $a\in \A$, let
\be \label{fcyl}
f = \sum_{a\in \A} f_a,\ \ \ \mbox{with}\ \ f_a(x,t) = \One_{\Xx_a} \psi_a(t),\ \ \mbox{where}\ \ 
\Xx_a=X_a\times [0,s_a].
\ee
%Here $\Xx_a$ denotes the set $X_a\times [0,s_a]$.
For a word $v$ in the alphabet $\Ak$ denote by $\vec{\ell}(v)\in \Z^m$ its ``population vector'' whose $j$-th entry is the number of $j$'s in $v$, for $j\le m$. We will  need the
``tiling length'' of $v$ defined by 
\be \label{tilength}
|v|_{\vec{s}}:= \langle\vec{\ell}(v), \vec{s}\rangle.
\ee

\noindent For $v=v_0\ldots v_{N-1}\in \Ak^+$ let
\be \label{def-Phi3}
\Phi_a^{\vec{s}}(v,\om) = \sum_{j=0}^{N-1} \delta_{v_j,a} \exp(-2\pi i \om |v_0\ldots v_j|_{\vec{s}}).
\ee
Then a straightforward calculation shows
\be \label{SR1}
S_R^{(x,0)}(f_a,\om) = \widehat{\psi}_a(\om) \cdot {\Phi_a^{\vec{s}}(x[0,N-1],\om)}\ \ \ \mbox{for}\ \ R = \left|x[0,N-1]\right|_{\vec{s}}.
\ee

Next we quote Proposition 4.4 from \cite{BuSo1}, with a tiny modification. The symbol $\|x\|$ denotes the distance from $x\in \R$ to the nearest integer (when we use $\|\cdot\|$ for a norm, this is always indicated by a subscript).

\begin{prop}  \label{prop-Dioph0}
Let $\zeta$ be a primitive substitution on $\A$ and $v$ a  word (called ``return word'') starting with $c\in\A$, such that $vc$ occurs as a subword in  $\zeta(b)$ for every $b\in \A$. Let $\vec{s} \in \Delta^{m-1}$.
Then there exist $c_1\in (0,1)$ and $C,C',C_2>0$, depending only on the substitution $\zeta$ and $\min_j s_j$, such that 

{\bf (i)} for all $a,b\in \A$, $n\in \Nat$, and $\om\in \R$,
\be \label{eq-new19}
|\Phi_a^{\vec{s}}(\zeta^n(b),\om)|\le C |\zeta^n(b)|_{\vec{s}} \cdot \prod_{k=0}^{n-1} \Bigl(1 - c_1\bigl\|\om\,|\zeta^k(v)|_{\vec{s}}\bigr\|^2\Bigr);
\ee

{\bf (ii)} for all $R>1,\ \om \in \R$, and a cylindrical Lipschitz function $f$,
\be \label{eq-matrest10}
|S^{(x,t)}_R(f,\om)| \le C' \|f\|_\infty \cdot \min\{1,|\om|^{-1}\}\cdot R\ \cdot\!\!\!\!\prod_{k=0}^{\lfloor \log_\theta R- C_2\rfloor} (1 - c_1\bigl\|\om |\zeta^k(v)|_{\vec{s}}\bigr\|^2)\ \ \ \mbox{for all}\ \ (x,t) \in \Xxi^{\vec{s}},
\ee
where $\theta$ is the Perron-Frobenius eigenvalue of the substitution matrix $\Sf=\Sf_\zeta$.
%and
%\be \label{eq-matrest}
%G_N(\One_{[a]},\om)\le (C')^2 N\prod_{k=0}^{\lfloor c_2\log N\rfloor} \left(1 - c_1\bigl\|\om |\zeta^k(v)|\bigr\|^2\right)^2.
%\ee
\end{prop}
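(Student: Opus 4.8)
The plan is to establish the pointwise bound (i) by induction on $n$, and then to derive (ii) by unfolding a twisted Birkhoff integral along the level-$n$ blocks of the substitution with $n\approx\log_\theta R$.

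\medskip

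\noindent\emph{Part (i).} Fix $a\in\A$, $\om\in\R$, and abbreviate $P_n:=\prod_{k=0}^{n-1}\bigl(1-c_1\|\om\,|\zeta^k(v)|_{\vec{s}}\|^2\bigr)$, so that (\ref{eq-new19}) is the assertion $|\Phi_a^{\vec{s}}(\zeta^n(b),\om)|\le C\,|\zeta^n(b)|_{\vec{s}}\,P_n$. I would prove this by induction on $n\ge0$, with $C=1/\min_j s_j$. The base case $n=0$ is the trivial bound $|\Phi_a^{\vec{s}}(b,\om)|=\delta_{a,b}\le1\le Cs_b$. For the step, write $\zeta(b)=b_1\cdots b_p$, so $\zeta^n(b)=\zeta^{n-1}(b_1)\cdots\zeta^{n-1}(b_p)$, and splitting the sum in (\ref{def-Phi3}) according to these blocks gives the concatenation identity
\[
\Phi_a^{\vec{s}}(\zeta^n(b),\om)=\sum_{i=1}^{p}e^{-2\pi i\om L_i}\,\Phi_a^{\vec{s}}(\zeta^{n-1}(b_i),\om),\qquad L_i:=|\zeta^{n-1}(b_1\cdots b_{i-1})|_{\vec{s}}.
\]
Since $vc$ occurs in $\zeta(b)$ and $v$ begins with $c$, there is $q$ with $b_{q+1}=b_{q+|v|+1}=c$ and $L_{q+|v|+1}-L_{q+1}=|\zeta^{n-1}(v)|_{\vec{s}}=:\tau_{n-1}$; the two corresponding terms combine into $e^{-2\pi i\om L_{q+1}}\bigl(1+e^{-2\pi i\om\tau_{n-1}}\bigr)\Phi_a^{\vec{s}}(\zeta^{n-1}(c),\om)$, of modulus $2|\cos(\pi\om\tau_{n-1})|\cdot|\Phi_a^{\vec{s}}(\zeta^{n-1}(c),\om)|$. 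Bounding every summand by the inductive hypothesis and using $\sum_{i=1}^{p}|\zeta^{n-1}(b_i)|_{\vec{s}}=|\zeta^n(b)|_{\vec{s}}$ yields
\[
|\Phi_a^{\vec{s}}(\zeta^n(b),\om)|\le C\,P_{n-1}\Bigl(|\zeta^n(b)|_{\vec{s}}-2\bigl(1-|\cos(\pi\om\tau_{n-1})|\bigr)\,|\zeta^{n-1}(c)|_{\vec{s}}\Bigr).
\]
The decisive point is that $|\zeta^{n-1}(c)|_{\vec{s}}\ge\rho\,|\zeta^n(b)|_{\vec{s}}$ for a constant $\rho=\rho(\zeta,\min_j s_j)>0$ independent of $n,b$: both sides equal $\langle\Sf^{n-1}\vec{w},\vec{s}\rangle$ for a positive integer vector $\vec{w}$, so by primitivity the ratio converges to $q_c/(\theta q_b)>0$ ($\vec{q}$ the left Perron--Frobenius eigenvector of $\Sf$), hence is bounded below for all large $n$, and trivially for the remaining finitely many $n$. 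Using also $1-|\cos(\pi y)|=1-\cos(\pi\|y\|)\ge2\|y\|^2$ and choosing $c_1:=\min\{4\rho,\;2(\theta-1)/\theta,\;1/2\}$ (note $\theta>1$), the last display becomes $\le C\,|\zeta^n(b)|_{\vec{s}}\,P_{n-1}\bigl(1-c_1\|\om\tau_{n-1}\|^2\bigr)=C\,|\zeta^n(b)|_{\vec{s}}\,P_n$, closing the induction.

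\medskip

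\noindent\emph{Part (ii).} First reduce to the base $X_\zeta\times\{0\}$ via $S_R^{(x,t)}(f,\om)=e^{2\pi i\om t}\bigl(S_{R+t}^{(x,0)}(f,\om)-S_t^{(x,0)}(f,\om)\bigr)$, the last term being the integral of a Lipschitz function against a character over an interval of length $\le1$, hence $O\bigl(\|f\|_\infty\min\{1,|\om|^{-1}\}\bigr)$. Choosing $N$ with $|x[0,N-1]|_{\vec{s}}\in(R+t-1,R+t]$, formula (\ref{SR1}) gives (up to one more boundary term of the same size) $S_{R+t}^{(x,0)}(f,\om)=\sum_{a}\widehat{\psi}_a(\om)\,\Phi_a^{\vec{s}}(x[0,N-1],\om)$, with $|\widehat{\psi}_a(\om)|\lesssim\|f\|_\infty\min\{1,|\om|^{-1}\}$ (using that $\psi_a$ is Lipschitz). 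Put $n:=\lfloor\log_\theta R-C_2\rfloor$; if $n<0$ then $x[0,N-1]$ has $O(1)$ letters and the bound is immediate. Otherwise, by recognizability of the aperiodic primitive substitution (Moss\'e's theorem), $x[0,N-1]$ — of tiling length $\asymp\theta^{n+C_2}$ — is a suffix of one level-$n$ block, followed by at most $\mathrm{const}(\zeta,C_2)$ complete level-$n$ blocks, followed by a prefix of one level-$n$ block. The complete blocks are estimated by (i); the two partial blocks are unfolded level by level — at level $j$ peeling off at most $\mathrm{const}$ complete level-$j$ blocks (again estimated by (i)) and leaving a shorter partial block — which produces a sum of the shape $\sum_{j=0}^{n-1}\mathrm{const}\cdot\theta^{j}P_j$. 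Because $c_1<2(\theta-1)/\theta$, the sequence $\theta^jP_j$ grows geometrically, so this sum is $\lesssim\theta^{n-1}P_{n-1}\lesssim R\,P_n$; the same choice of $c_1$ gives $P_n\ge R^{-\beta}$ with $\beta=\log_\theta\!\bigl(1/(1-c_1/4)\bigr)<1$, so $R\,P_n>1$ and every additive $O\bigl(\|f\|_\infty\min\{1,|\om|^{-1}\}\bigr)$ error is absorbed. Collecting the estimates gives $|S_R^{(x,t)}(f,\om)|\lesssim\|f\|_\infty\min\{1,|\om|^{-1}\}\,R\,P_n$, and replacing $R+t$ by $R$ and $n-1$ by $\lfloor\log_\theta R-C_2\rfloor$ merely readjusts the constants, yielding (\ref{eq-matrest10}).

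\medskip

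\noindent\emph{The main obstacle} is the inductive step of (i). The naive recursion — bounding each block contribution $|\Phi_a^{\vec{s}}(\zeta^{n-1}(b_i),\om)|$ by a single $C\theta^{n-1}P_{n-1}$ — does not close, since the growth rate it produces is $\max_b|\zeta(b)|$, a count of letters that typically exceeds $\theta$. The remedy is to carry the \emph{tiling length} $|\zeta^{n-1}(b_i)|_{\vec{s}}$ of each block through the induction, so that the contributions add up \emph{exactly} to $|\zeta^n(b)|_{\vec{s}}$ and the single cancellation $2(1-|\cos(\pi\om\tau_{n-1})|)|\zeta^{n-1}(c)|_{\vec{s}}$ is recognized as a definite fraction of the whole — which is precisely what the Perron--Frobenius bound $|\zeta^{n-1}(c)|_{\vec{s}}\ge\rho\,|\zeta^n(b)|_{\vec{s}}$ guarantees. (This is also what makes the bound uniform in $\om\in\R$, including at $\om=0$.) Part (ii) is then bookkeeping over scales, the only genuine constraint being to take $c_1$ small relative to $\theta$ so that the scale-by-scale geometric sums converge and dominate the additive boundary errors.
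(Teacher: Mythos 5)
Your proof is correct and follows essentially the same route as the one the paper relies on (Proposition 4.4 of \cite{BuSo1}): part (i) comes from the cancellation between the two occurrences of $c$ flanking the return word $v$, whose phase gap is $\om\,|\zeta^{n-1}(v)|_{\vec{s}}$, combined with the Perron--Frobenius lower bound on $|\zeta^{n-1}(c)|_{\vec{s}}/|\zeta^{n}(b)|_{\vec{s}}$ to close the induction, and part (ii) is the standard hierarchical supertile decomposition with a geometric summation over scales (requiring $c_1$ small relative to $\theta$). The only cosmetic point is that, exactly as in the paper's own remark following the statement, the factor $\min\{1,|\om|^{-1}\}$ genuinely requires the Lipschitz norm, so the constant should be read as depending on $\|f\|_L$ rather than $\|f\|_\infty$, which is how the bound is in fact used later in (\ref{imp}).
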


The only difference from \cite{BuSo1} is that there we only considered characteristic functions of cylinder sets, instead of general cylindrical functions. However, the proof is exactly the same, taking (\ref{SR1}) into account, and the well-known inequality for the Fourier transform of a Lipschitz function: 
$$|\widehat{\psi}_a(\om)| \le \const\cdot \|\psi_a\|_L \cdot\min\{1, |\om|^{-1}\}\le  \const\cdot \|f\|_L \cdot\min\{1, |\om|^{-1}\}.
$$

%%%%%%%%%%%%%%%%%%%%%%%%%%%%%%%%%%%%%%%%%%%%%%%%%%%%%%%%%%

\section{Proof of Proposition~\ref{prop-holder2}}
\subsection{Preliminary considerations}
The proof relies on the so-called ``Erd\H{o}s-Kahane argument'', which originated in the study of infinite Bernoulli convolutions, see \cite{Erd,Kahane}.
While the proof of Proposition~\ref{prop-holder2} follows the general scheme of that of \cite[Theorem 4.2]{BuSo1}, the technical  implementation of the Erd\H{o}s-Kahane argument is quite different, see Proposition \ref{prop-EKvar} below.

Recall that, passing to a power $\zeta^\ell$ if necessary, we can always obtain a return word $v$ as in the statement of Proposition~\ref{prop-Dioph0}, and the existence of such a word (for $\zeta$ itself)  will be the standing assumption until the end of the section.

%In view of Proposition~\ref{prop-Dioph0} and Lemma~\ref{lem-varr}, the theorem will follow once we demonstrate that for any $\eta>0$ there exist $\delta>0$ and 
%a set $\Ek_\eta\subset \Delta_{m-1}$ of Hausdorff dimension at most
%$q+\eta$ (recall that $q$ is the number of eigenvalues of $\Sf$ inside the closed unit circle), such that 
%\begin{eqnarray}
%& &  \forall\, \vec{s}\not\in \Ek_\eta,\ \forall \Bu>1,\ \exists\, N_0(\vec{s},\Bu)\in \Nat: \nonumber \\
 %& & \max_{|\om|\in [\Bu^{-1},\Bu]} \prod_{n=1}^{N} (1-c_1\|\om|\zeta^n(v)|_{\vec{s}}\|^2)\le (1-\delta)^N,\ \ 
% N\ge N_0(\vec{s},\Bu), \label{equ1}
 %\end{eqnarray}
%where $v$ is a return word from Proposition~\ref{prop-Dioph0}. (Recall that, passing to a power $\zeta^\ell$, if necessary, we can always assume that such a return word exists.) We will get the estimate (\ref{holder1}) with $\gam = -2c_2 \log(1-\delta)>0$.

Let $\theta_1=\theta, \theta_2,\ldots,\theta_m$ be the eigenvalues of the  substitution matrix $\Sf$, ordered by magnitude, and let $\vec{e}_j$ be the corresponding eigenvectors of unit norm (real and complex). (Recall that  irreducibility of the characteristic polynomial of  $\Sf$ implies diagonalizability over $\C$.) Suppose that $\Sf$ has exactly $q$ eigenvalues of absolute value $\le 1$, for some $q< m-1$. In other words,
$$
|\theta_{m-q}|>1,\ \ \ |\theta_{m-q+1}|\le 1
$$
(we do not exclude the possibility of $q=0$; in that case the second inequality is vacuous).
Let $\{\vec{e_j^*}\}_1^m$ be the dual basis, i.e.\ $\vec{e_j^*}$ is the eigenvector of the transpose $\Sf^t$ corresponding to $\theta_j$, such that $\langle \vec{e}_i, \vec{e_j^*}\rangle = \delta_{ij}$. %(The notation $\vec{e}_j$ clashes with that of Section 2.3, but this should not lead to a confusion.) 
Then $\vec{s} = \sum_{j=1}^m \langle \vec{e}_j,\vec{s}\rangle \vec{e_j^*}$, hence
$$
|\zeta^n(v)|_{\vec{s}}=\langle \vec{\ell}(\zeta^n(v)), \vec{s}\rangle = \langle \Sf^n \vec{\ell}(v),\vec{s}\rangle = \sum_{j=1}^m \langle \vec{e}_j,\vec{s}\rangle \, \langle \vec{\ell}(v),\vec{e_j^*}\rangle \,\theta_j^n,\  \ n\ge 0.
$$
Let
\be \label{coord}
b_j = \langle \vec{e}_j,\vec{s}\rangle \, \langle \vec{\ell}(v),\vec{e_j^*}\rangle,\ j=1,\ldots,m,
\ee
so that
\be \label{coord2}
|\zeta^n(v)|_{\vec{s}}= \sum_{j=1}^m b_j \theta_j^n.
\ee
We always have $b_1>0$, since $\theta_1$ is the Perron-Frobenius eigenvalue, both eigenvectors $\vec{e}_1$ and $\vec{e_1^*}$ are strictly positive, $\vec{s}$ is strictly positive, and $\vec{\ell}(v)\ne \vec{0}$ is
non-negative. Further, since $\vec{\ell}(v)$ is an integer vector and the characteristic polynomial of $\Sf$ is irreducible, we have $\langle \vec{\ell}(v),\vec{e_j^*}\rangle \ne 0$ for all $j\le m$. Indeed, otherwise $\Sf$ would have a rational invariant subspace, spanned by $\Sf^n \vec{\ell}(v),\ n\ge 0$, of dimension less than $m$, contradicting the fact that its eigenvalues are algebraic integers of degree $m$. Note also that
$b_{j'}=\ov{b_j}$ for $\theta_{j'} = \ov{\theta_j}$.
Let
$$
\Hk^{m-1} = \{(a_1,\ldots,a_m)\in \C^{m}:\ a_1=1,\ a_{j'} = \ov{a_j}\ \mbox{for}\ \theta_{j'} = \ov{\theta_j}\},
$$
in particular, $a_j$ are real for real eigenvalues $\theta_j$. Further,
let $P_{m-q}$ be the projection from $\Hk^{m-1}$ to the subspace spanned by the first $m-q$ coordinates, and let $\Hk^{m-q-1} = P_{m-q}\Hk^{m-1}$. It is clear that $\Hk^{m-1}$ is a real affine-linear space of dimension $m-1$ and $\Hk^{m-q-1}$ is a real affine-linear space
of dimension $m-q-1$.  It is convenient to pass from $\Delta^{m-1}$ to a subset of $\Hk^{m-1}$ when parametrizing the suspension flows. To this end, consider the map ${\Fk}:\,\Delta^{m-1} \to \C^{m}$ given by
\be \label{def-Fk}
\Fk(\vec{s}) = \left(\frac{\langle \vec{e}_j,\vec{s}\rangle \, \langle \vec{\ell}(v),\vec{e_j^*}\rangle}{\langle \vec{e}_1,\vec{s}\rangle \, \langle \vec{\ell}(v),\vec{e_1^*}\rangle}\right)_{1\le j \le m}.
\ee
The map ${\Fk}$ is a change of basis transformation, which it is linear and invertible, followed by 
division by the first coordinate. Notice that $\langle \vec{e}_1,\vec{s}\rangle \, \langle \vec{\ell}(v),\vec{e_1^*}\rangle$ is positive and bounded away from zero by a constant depending only on $\zeta$ and on $v$ (and since $v$ is fixed, it depends only on $\zeta$). Note also that $\Fk(\Delta^{m-1}) \subset \Hk^{m-1}$. Thus $\Fk$ is 1-to-1 and $\|\Fk^{-1}\|_\infty$ depends only on $\zeta$, where $\Fk^{-1}$ is considered on the range $\Fk(\Delta^{m-1})$. It is also
clear that $\Fk$ preserves  Hausdorff dimension.

\subsection{A variant of the Erd\H{o}s-Kahane argument}
The following proposition contains the core of the proof of Proposition~\ref{prop-holder2}, and it is different in many technical details from the corresponding Proposition 4.5 in \cite{BuSo1}. Consider the Vandermonde matrix
\be \label{Vand}
\Theta = \left( \begin{array}{ccc} 1 & \ldots & 1 \\ \vdots & \ddots & \vdots \\  \theta_1^{m-1} & \ldots & \theta_m^{m-1} \end{array} \right)
\ee
and the $\ell^\infty$ operator norms $\|\Theta^{\pm 1} \|_\infty$; note that $\Theta$ is invertible, since all $\theta_j$ are distinct. 
%For $\eta>0$ let
%$$
%\Delta_\eta^{m-1}:= \{\vec{s}\in \Delta^{m-1}:\ \min_j s_j \ge \eta\}.
%$$

\begin{prop} \label{prop-EKvar} Let  $k\in \Nat$. Consider two constants, depending only on the substitution matrix (actually, on
$\Theta$), defined as follows:
\be \label{def-Lrho}
\rho:= \half (1+\theta_1\|\Theta\|_\infty\|\Theta^{-1}\|_\infty)^{-1}\ \ \ \mbox{and}\ \ \ L:=2+\theta_1\|\Theta\|_\infty\|\Theta^{-1}\|_\infty.
\ee
For $B\ge 2$  let $E_k^N(B)$ be the set of
$(a_1,\ldots,a_{m-q})\in \Hk^{m-q-1}$  such that  there exist $\om\in [B^{-1},B]$ and $a_{m-q+1},\ldots,a_m$, with $(a_1,\ldots,a_m)\in \Fk(\Delta^{m-1})$, for which
\be \label{eq2}
 \card \Bigl\{n\in [1,N]:\ \Bigl\|\om\sum_{j=1}^m a_j \theta_j^n\Bigr\|\ge \rho\Bigr\} < \frac{N}{k}\,.
\ee
Further, for $\Ups>0$ let 
$$E_k(\Ups):=\bigcap_{N_0=1}^\infty \ \ \bigcup_{B=2}^\infty \ \ \bigcup_{N=N_0+\lfloor \Ups \log B \rfloor}^\infty E_k^N(B).$$
Then 
%\be \label{upper}
$$\lim_{\Ups\to \infty} \lim_{k\to \infty} \dim_H (E_k(\Ups))=0.$$
%\ee
%We do not exclude the case $q=0$.
\end{prop}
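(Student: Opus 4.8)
The plan is to run a variant of the Erd\H{o}s--Kahane argument, exploiting the fact that condition (\ref{eq2}) forces, for most $n\in[1,N]$, the quantity $\om\sum_{j=1}^m a_j\theta_j^n$ to be within $\rho$ of an integer, and then showing that the parameters $(a_1,\ldots,a_{m-q})$ satisfying this constraint form a set that can be covered very efficiently. Concretely, fix $B$ and $N$; suppose $(a_1,\ldots,a_{m-q})\in E_k^N(B)$, witnessed by some $\om\in[B^{-1},B]$ and by the completion $(a_{m-q+1},\ldots,a_m)$. Set $x_n:=\om\sum_{j=1}^m a_j\theta_j^n$ and let $N_n$ be the nearest integer to $x_n$, so that $\|x_n\| = |x_n - N_n|$. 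The key recursive mechanism is that the sequence $(x_n)$ satisfies an order-$m$ linear recurrence with characteristic polynomial $\prod_j (X-\theta_j)$ (the characteristic polynomial of $\Sf$); equivalently, the vector $(x_n, x_{n+1}, \ldots, x_{n+m-1})^t$ determines $(x_{n+1},\ldots,x_{n+m})^t$ linearly, and the Vandermonde matrix $\Theta$ converts between the ``value coordinates'' $(x_n,\ldots,x_{n+m-1})$ and the ``modal coordinates'' $(\om a_1\theta_1^n,\ldots,\om a_m\theta_m^n)$. This is exactly why $\rho$ and $L$ in (\ref{def-Lrho}) are defined through $\theta_1$, $\|\Theta\|_\infty$, $\|\Theta^{-1}\|_\infty$: the choice of $\rho$ guarantees that once we know $N_n,\ldots,N_{n+m-1}$ and the previous modal coordinates are of controlled size, there are only boundedly many (in fact at most $L$) choices for the next integer $N_{n+m}$ consistent with $\|x_{n+m}\|<\rho$, while the decaying modes $\theta_{m-q+1},\ldots,\theta_m$ (having modulus $\le 1$) contribute a bounded error and the growing modes $\theta_1,\ldots,\theta_{m-q}$ are pinned down by the integer data.

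Next I would turn this into a counting/covering bound. Run the recursion over those $n$ where $\|x_n\|<\rho$ — by (\ref{eq2}) these are all but fewer than $N/k$ of the indices in $[1,N]$ — and observe that specifying the initial integer block together with the sequence of ``jumps'' $(N_{n+m})$ along the good indices determines the modal coordinates $(\om a_j\theta_j^n)_j$ up to an exponentially small error (in the number of good steps), because the growing directions are contracted when read backwards. Since at each good step there are at most $L$ possibilities for the next integer, and at each of the fewer than $N/k$ bad steps we pay a crude bound of, say, $O(B)$ possibilities (bounded polynomially in $B$, using $\om\in[B^{-1},B]$ and that the $a_j$ are bounded via $(a_1,\ldots,a_m)\in\Fk(\Delta^{m-1})$, which is a fixed compact set by the remarks after (\ref{def-Fk})), the total number of integer-itineraries is at most $L^N\cdot (CB)^{N/k}\cdot(\text{initial data})$. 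Each itinerary pins $(\om a_1\theta_1^N,\ldots,\om a_m\theta_m^N)$, and hence $\om$ (from the first coordinate, since $a_1=1$) and then each $a_j$, to within an error that shrinks like $\theta_1^{-cN}$ in the growing coordinates; projecting to $\Hk^{m-q-1}$ this gives a cover of $E_k^N(B)$ by at most $L^N (CB)^{N/k}$ balls of radius $\asymp \theta_1^{-cN}$ (possibly after also discretizing $\om$ at scale $\theta_1^{-cN}$, which costs another polynomial-in-$B$ factor absorbed into the same estimate). Taking $s$-dimensional Hausdorff sums, $\sum (\text{radius})^s \lesssim \bigl(L\,(CB)^{1/k}\,\theta_1^{-cs}\bigr)^{N}\cdot(\text{initial})$, which is summable over $N\ge N_0+\lfloor\Ups\log B\rfloor$ and then over $B\ge 2$ provided
\[
s > \frac{\log L}{c\log\theta_1} + \frac{\text{(something)}}{k} + \frac{\text{(something)}}{\Ups},
\]
where the $1/\Ups$ term arises because the cutoff $N\ge \Ups\log B$ lets the factor $(CB)^{1/k}$ and the $B$-dependence of the initial data be beaten by $\theta_1^{-csN}\le B^{-cs\Ups}$, making the sum over $B$ converge and contributing a term that $\to 0$ as $\Ups\to\infty$. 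This yields $\dim_H(E_k(\Ups)) \le \frac{\log L}{c\log\theta_1} + O(1/k) + O(1/\Ups)$.

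Finally, and this is the crucial point where the structure must be used rather than soft bounds: the leading term $\frac{\log L}{c\log\theta_1}$ as written is a fixed positive constant, which is \emph{not} good enough — we need the limit over $k$ and $\Ups$ to be $0$. The resolution is that we must not run the recursion at every index but in blocks: group the good indices into blocks of length $p$, so that over a block the $L$ choices of integer at each of the $p$ steps are heavily overdetermined — there are at most $L$ consistent integers for the \emph{whole} block of $p$ new values once the preceding $m$ are fixed, because the block of $p$ values lies in an $m$-dimensional affine family and $p$ nearby-integer constraints cut it down to $O(1)$ solutions once $p\ge m$ (here one again uses $\|\Theta^{-1}\|_\infty$ to control the back-propagation, and $\rho$ small enough relative to $L$). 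Then the count becomes $L^{N/p}(CB)^{N/k}$ against radius $\theta_1^{-cN}$, giving $\dim_H \le \frac{\log L}{p\,c\log\theta_1}+O(1/k)+O(1/\Ups)$, and \emph{now} we additionally let $p\to\infty$ — but $p$ is tied to the parameters we already have: taking $p\asymp\sqrt{k}$ (say) and then $k\to\infty$, $\Ups\to\infty$ kills all three terms. So the actual order of limits is: for each $\Ups$, as $k\to\infty$ we may let the internal block-length $p=p(k)\to\infty$, forcing $\lim_{k\to\infty}\dim_H(E_k(\Ups))\le C/\Ups$, and then $\Ups\to\infty$ finishes. The main obstacle, and the part requiring the most care, is precisely this block-counting lemma — showing that $p$ consecutive near-integer conditions on a trajectory of the linear recurrence reduce the number of admissible integer-blocks from $L^p$ down to $O(L)$ (uniformly in the bounded ``hidden'' coordinates $a_{m-q+1},\ldots,a_m$ coming from the $|\theta_j|\le 1$ modes, which act as a bounded perturbation but must be carried through), since that is what converts the naive positive-constant dimension bound into one that genuinely tends to $0$.
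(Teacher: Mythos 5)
Your overall architecture is the same as the paper's: write $\om\sum_j a_j\theta_j^n=K_n+\eps_n$, pass to the order-$m$ recursion $\vec{K}_{n+1}+\vec{\eps}_{n+1}=\Theta\,{\rm Diag}[\theta_j]\,\Theta^{-1}(\vec{K}_n+\vec{\eps}_n)$, count admissible integer itineraries, and cover $E_k^N(B)$ by balls of exponentially small radius. But the counting step --- which is the heart of the matter --- is wrong as first written, and the patch you propose is left unproven. At a ``good'' step the multiplicity is not $L$ but $1$: Lemma~\ref{lem-step}(i) states that if $|\eps_n|,\ldots,|\eps_{n+m}|$ are all $<\rho$, then $K_{n+m}$ is \emph{uniquely} determined by $K_n,\ldots,K_{n+m-1}$ (this is exactly what the specific value of $\rho$ in (\ref{def-Lrho}) buys). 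Consequently the total count is $O_\zeta(B^m)\cdot\binom{N}{\lceil N/k\rceil}\cdot L^{(m+1)\lceil N/k\rceil}$: freedom (at most $L$ choices, by Lemma~\ref{lem-step}(ii)) occurs only when the window $\{n-m+1,\ldots,n+1\}$ meets the bad set of size $<N/k$, and the exponential growth rate of this count already tends to $0$ as $k\to\infty$. Your detour through blocks of length $p\to\infty$ tied to $k$ is therefore unnecessary, and in any case you explicitly leave the block-counting lemma (``$p$ near-integer conditions cut $L^p$ down to $O(L)$'') as the main unproved obstacle; the dimension-counting heuristic you offer for it is not a proof, and the correct quantitative statement is precisely Lemma~\ref{lem-step}(i), which needs the particular interplay between $\rho$, $L$, $\theta_1$ and $\|\Theta^{\pm1}\|_\infty$.

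A second, independent problem is your bookkeeping of the bad steps. At a bad index $n$ the integer $K_n$ a priori ranges over $O(B\theta_1^n)$ values, not $O(B)$; the only way to get a bounded count there is again through the recursion, via the $L$ choices of Lemma~\ref{lem-step}(ii). Even granting your $O(B)$ per bad step, the resulting factor $(CB)^{N/k}$ is fatal: for fixed $k$ and $s$, once $\log(CB)>csk\log\theta_1$ the inner series $\sum_N (CB)^{N/k}\theta_1^{-csN}$ diverges, so no choice of $\Ups$ can make the sum over $B$ converge. In the paper the $B$-dependence is confined to the $O_\zeta(B^m)$ choices of the initial block $K_1,\ldots,K_m$; this is polynomial in $B$ and is beaten by $|\theta_{m-q}|^{-\eta N/2}\le B^{-\eta\Ups\log|\theta_{m-q}|/2}$ thanks to the cutoff $N\ge N_0+\lfloor\Ups\log B\rfloor$ --- that is the sole role of $\Ups$. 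Two smaller points: you omit the binomial factor $\binom{N}{\lceil N/k\rceil}$ for the location of the bad indices, which costs $\exp(\wtil{C}Nk^{-1}\log k)$ and must be absorbed into the $k\to\infty$ limit; and the covering radius is governed by the \emph{smallest} expanding eigenvalue, i.e.\ it is of order $B|\theta_{m-q}|^{-N}$, since the coordinate $a_{m-q}$ is the one recovered worst from $\vec{K}_{N-m+1}$.
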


\begin{proof}
For $\om >0$ and $(a_1,\ldots,a_m)\in \Fk(\Delta^{m-1})$, let
\be \label{eq3}
\om \sum_{j=1}^m a_j \theta_j^n = K_n +\eps_n,\ \ K_n \in \Z,\ \ |\eps_n|\le 1/2,\ n\ge 1,
\ee
so that $\|\om \sum_{j=1}^m a_j \theta_j^n \|=|\eps_n|$.
Denote
$$
\vec{a} = \left( \begin{array}{c} a_1 \\ \vdots \\ a_m \end{array}\right),\ \ \ \vec{K}_n = \left( \begin{array}{c} K_n \\ \vdots \\ K_{n+m-1} \end{array}\right),\ \ \mbox{and}\ \  
\vec{\eps}_n = \left( \begin{array}{c} \eps_n \\ \vdots \\ \eps_{n+m-1} \end{array}\right);
$$
then  equations (\ref{eq3}) for $n, n+1,\ldots,n+m-1$ combine into 
\be \label{matr1}
\om \left( \begin{array}{ccc} \theta_1^n & \ldots & \theta_m^n \\ \vdots & \ddots & \vdots \\ \theta_1^{n+m-1} & \ldots & \theta_m^{n+m-1} \end{array} \right) \vec{a} = \vec{K}_n + \vec{\eps}_n.
\ee
Let ${\rm Diag}[\theta_j^n]$ be the diagonal matrix with the diagonal entries $\theta_1^n,\ldots,\theta_m^n$, 
then (\ref{matr1}) becomes
%\end{sloppypar}
$$
\om \,\Theta \cdot {\rm Diag}[\theta_j^n] \,\vec{a} = \vec{K}_n + \vec{\eps}_n,\ \ \ n\ge 1,
$$
where $\Theta$ is the Vandermonde matrix (\ref{Vand}).
The Vandermonde matrix is invertible, since $\theta_j$ are all distinct. Also, all $\theta_j$ are nonzero since $\Sf$ is irreducible, hence
\be \label{eq39}
\vec{a} = \om^{-1} {\rm Diag}[\theta_j^{-n}]\,\Theta^{-1}(\vec{K}_n + \vec{\eps}_n),\ \ \ n\ge 1.
\ee
Now, comparing (\ref{eq39}) with the same equality for $n+1$, we obtain
\be \label{ura}
\vec{K}_{n+1} + \vec{\eps}_{n+1} = \Theta \,{\rm Diag}[\theta_j]\,\Theta^{-1} (\vec{K}_n + \vec{\eps}_n).
\ee

\begin{lemma} [Lemma 4.6 in \cite{BuSo1}]\label{lem-step} 
Let $\rho$ and $L$ be the constants given by (\ref{def-Lrho}). Consider arbitrary $\om>0$ and $\vec{a} = (a_1,\ldots,a_m) \in \Hk^{m-1}$, and define $K_n,\eps_n$, $n\ge 1$, by the formula (\ref{eq3}).

{\bf (i)} if\ \ $\max\{|\eps_n|,\ldots,|\eps_{n+m}|\} <  \rho$, then $K_{n+m}$ is uniquely determined by
$K_n, K_{n+1}, \ldots, K_{n+m-1}$, independent of $\om$ and $(a_1,\ldots,a_m)$;

{\bf (ii)} given $K_n, K_{n+1}, \ldots, K_{n+m-1}$, there are at most $L$ possibilities for $K_{n+m}$.
\end{lemma}

We proceed with the proof of Proposition \ref{prop-EKvar}.
It follows from (\ref{eq39}) that
\be \label{eq4}
a_j =\om^{-1} \theta_j^{-n} [\Theta^{-1}(\vec{K}_n + \vec{\eps}_n)]_{_{\scriptstyle j}},\ \ \ j=1,\ldots,m,\ \ n\ge 1,
\ee
where $[\cdot]_j$ denotes the $j$-th component of a vector. 
Assuming that $(a_1,\ldots,a_{m-q}) \in E_k^N(B)$ we will have $\om\in [B^{-1},B]$ and $a_1=1$, hence 
\be \label{kapusta0}
B^{-1} \theta_1^n \le |[\Theta^{-1}(\vec{K}_n + \vec{\eps}_n)]_{_{\scriptstyle 1}}| \le B \theta_1^n,\ \ n\ge 1.
\ee
Furthermore,  $\Fk^{-1}(a_1,\ldots,a_m) \in \Delta^{m-1}$ implies $|a_j| \le \|\Fk^{-1}\|_\infty$, therefore,
\be \label{kapusta}
 \left|[\Theta^{-1}(\vec{K}_n + \vec{\eps}_n)]_{_{\scriptstyle j}}\right| \le  \Bu\|\Fk^{-1}\|_\infty \cdot |\theta_j|^n,\ \ \ j\le m-q, \ \ n\ge 1.
\ee
From (\ref{eq4}), recalling that $a_1=1$, we obtain 
\be \label{eq-a}
a_j = \frac{\theta_j^{-n} [\Theta^{-1}(\vec{K}_n + \vec{\eps}_n)]_{_{\scriptstyle j}}}{\theta_1^{-n} [\Theta^{-1}(\vec{K}_n + \vec{\eps}_n)]_{_{\scriptstyle 1}}}\approx \frac{\theta_j^{-n} [\Theta^{-1}\vec{K}_n ]_{_{\scriptscriptstyle j}}}{\theta_1^{-n} [\Theta^{-1}\vec{K}_n ]_{_{\scriptscriptstyle 1}}}\,\ \ \ j\le m-q,
\ee
for $n$ sufficiently large, and we need to be precise about this. We certainly want $\vec{K}_n$ to be a positive vector, which in view of (\ref{eq3}), is guaranteed when $n\ge O_\zeta(1)\cdot \log B$ (here and below we denote by $O_\zeta(1)$ a constant which depends only on the substitution $\zeta$).
To estimate the error in the approximation above, we can write
\be\label{kapusta2}
\left| \frac{ [\Theta^{-1}(\vec{K}_n + \vec{\eps}_n)]_{_{\scriptstyle j}}}{[\Theta^{-1}(\vec{K}_n + \vec{\eps}_n)]_{_{\scriptstyle 1}}} - 
\frac{ [\Theta^{-1}\vec{K}_n ]_{_{\scriptstyle j}}}{ [\Theta^{-1}\vec{K}_n ]_{_{\scriptstyle 1}}}\right| \le  
\frac{| [\Theta^{-1} \vec{\eps}_n]_{_{\scriptstyle j}}|}{|[\Theta^{-1}(\vec{K}_n + \vec{\eps}_n)]_{_{\scriptstyle 1}}|} + 
\frac{| [\Theta^{-1} \vec{\eps}_n]_{_{\scriptstyle 1}} [\Theta^{-1}\vec{K}_n ]_{_{\scriptstyle j}}|}{|[\Theta^{-1}(\vec{K}_n + \vec{\eps}_n)]_{_{\scriptstyle 1}} [\Theta^{-1}\vec{K}_n ]_{_{\scriptstyle 1}}|}\,.
\ee
Observe that 
$$\|\Theta^{-1}\vec{\eps}_n\|_\infty \le \|\Theta^{-1}\|_\infty \|\vec{\eps}_n\|_\infty\le 
(1/2) \|\Theta^{-1}\|_\infty=:C_\Theta.$$ 
%hence (\ref{kapusta0}) and (\ref{kapusta}) yield two-sided estimates of $|[\Theta^{-1}\vec{K}_n]_{_{\scriptstyle j}}|$ for $j\le m-q,\ \ n\ge 1$.
Thus we can continue (\ref{kapusta2}) to obtain for $j\le m-q$:
$$
\mbox{(\ref{kapusta2})} \ \le\  \frac{C_\Theta}{B^{-1}\theta_1^n} \left( 1 + \frac{B\|\Fk^{-1}\|_\infty |\theta_j|^n + C_\Theta}{B^{-1}\theta_1^n - C_\Theta}\right) \le 2B C_\Theta \theta_1^{-n}
$$
for $n\ge O_\zeta(1)\cdot \log B$, since $|\theta_j|<\theta_1$.  (The constant in the lower bound for $n$ depends only on the substitution, since $C_\Theta$ and $\|\Fk^{-1}\|_\infty$ are determined by $\zeta$.) Therefore, by the equality in (\ref{eq-a}),
\be\label{eq5}
\left|a_j - \frac{\theta_j^{-n} [\Theta^{-1}\vec{K}_n ]_{_{\scriptstyle j}}}{\theta_1^{-n} [\Theta^{-1}\vec{K}_n ]_{_{\scriptstyle 1}}}\right| \le 2B C_\Theta \cdot |\theta_j|^{-n},\ \ \ j\in\{2,\ldots,m-q\},\ \ n\ge O_\zeta(1)\cdot\log B.
\ee
It is crucial, of course, that $|\theta_j|>1$ for $j\in\{2,\ldots,m-q\}$.

We conclude the proof of Proposition~\ref{prop-EKvar}. 
%Suppose that $\ov{a}:=(a_1,\ldots,a_{m-q})\in E_k(\Ups)$. This means that for infinitely many $N_0\in \N$, there exists $B\ge 2$ such that $\ov{a}\in E_k^N(B)$ for some
%$N\ge N_0 + \lfloor \Ups \log B\rfloor$. Fix such $N_0, B$, and $N$. By definition, we can find $\om\in[B^{-1},B]$ and $a_{m-q+1},\ldots,a_m$, with $(a_1,\ldots,a_m)\in \Fk(\Delta^{m-1})$, for which (\ref{eq2}) holds. 
We estimate the Hausdorff dimension of $E_k(\Ups)$ from above by producing efficient covers of $E_k^N(B)$. Consider an arbitrary point 
$$(a_1,\ldots,a_{m-q}) \in E^N_k(B), \ \ N\ge N_0 + \lfloor \Ups \log B\rfloor.$$ By definition, we can find $\om\in[B^{-1},B]$ and $a_{m-q+1},\ldots,a_m$, with $(a_1,\ldots,a_m)\in \Fk(\Delta^{m-1})$, for which (\ref{eq2}) holds. We then
find the numbers $K_n,\eps_n$ from (\ref{eq3}). The inequality (\ref{eq5}) was proved for $n\ge  O_\zeta(1)\cdot \log B$, and we can apply it for $n = N-m+1$, assuming that
$\Ups > O_\zeta(1)$.
Using that  $$|\theta_{m-q}| = \min_{j\le m-q}|\theta_j|>1,$$ we obtain that $(a_1,\ldots,a_{m-q})$ is contained in the closed $\ell^\infty$ ball of
 radius $2BC_\Theta\cdot |\theta_{m-q}|^{-N+m-1}$, centered at the point
 $$
 (x_1,\ldots,x_{m-q}),\  \mbox{where}\  x_1=1\ \mbox{and}\ x_j = \frac{\theta_j^{-N+m-1} [\Theta^{-1}\vec{K}_{N-m+1} ]_{_{\scriptstyle j}}}{\theta_1^{-N+m-1} [\Theta^{-1}\vec{K}_{N-m+1} ]_{_{\scriptstyle 1}}},\ \ j=2,\ldots,m-q.
 $$
The number of such balls does not exceed the number of possible vectors $\vec{K}_{N-m+1}$. This, in turn, is bounded above by the
number of possible sequences $K_1,\ldots,K_N$. 
Now we use the crucial assumption (\ref{eq2}) in the definition of the set $E_k^N(B)$. The set $\{n\in [1,N]:\ |\eps_n| \ge \rho\}$                has cardinality less than $N/k$, and we can enlarge it arbitrarily
to get a set $\Gam \subset [1,N]\cap \Nat$ with $\card(\Gam) = \lceil \frac{N}{k} \rceil$. There are  ${N \choose \lceil N/k \rceil}$ such subsets $\Gam$, and it remains to estimate
the number of possible sequences $K_1,\ldots,K_N$ for a fixed $\Gam$.

Recall that
$
|a_j| \le \|\Fk^{-1}\|_\infty, \ j=1,\ldots,m.
$
Further, $|\om| \in [B^{-1},B]$, hence  (\ref{eq3}) implies an upper bound 
$$
|K_n| \le O_\zeta(1)\cdot B,\ \ n\ge 1.
$$
Thus, there are no more than $O_\zeta(1)\cdot B^m$ possibilities for the number of initial sequences $K_1,\ldots,K_m$.

Now we fix $\Gam\subset [1,N]\cap \Nat$ and consider those
$(a_1,\ldots,a_{m-q})$ for which
$|\eps_n|< \rho$ for $n\in [1,N]\setminus \Gamma$. 
Once $K_1,\ldots,K_n$ are determined, for $m\le n \le N-1$, we check whether $\{n-m+1,\ldots,n+1\}$ intersects $\Gam$. If it does, there are at most $L$ possibilities for $K_{n+1}$ by Lemma~\ref{lem-step}(ii). If it does not, then there is only one choice of $K_{n+1}$. It follows that the number of sequences $K_1,\ldots,K_N$ for the given $\Gam$ does not exceed $O_\zeta(1)\cdot B^m\cdot L^{(m+1)\card(\Gam)}$. 

Thus, the total number sequences, hence the balls of radius $2BC_\Theta\cdot |\theta_{m-q}|^{-N+m-1}$ needed to cover $E_k^N(B)$ is at most
$$
O_\zeta(1)\cdot B^m \cdot {N \choose \lceil N/k \rceil} \cdot L^{(m+1)\lceil N/k\rceil }.
$$ 
Therefore, we can estimate the Hausdorff measure $\Hk^\eta(E_k(\Ups))$, for a fixed $\eta\in (0,1)$, as follows: for all $N_0\ge 1$,
$$
\Hk^\eta(E_k(\Ups)) \le O_\zeta(1) \cdot \sum_{B=2}^\infty B^m \sum_{N= N_0 + \lfloor \Ups \log B\rfloor} {N \choose \lceil N/k \rceil} \cdot L^{(m+1)\lceil N/k\rceil }\left(2BC_\Theta\cdot |\theta_{m-q}|^{-N+m-1}\right)^\eta
$$
Stirling's formula implies that ${N \choose \lceil N/k \rceil} \le \exp[\wtil{C}(k^{-1}\log k)N]$ for some $\wtil{C}>0$, so we obtain from the above:
%\begin{eqnarray*}
$$
\Hk^\eta(E_k(\Ups)) \le O_\zeta(1) \cdot \sum_{B=2}^\infty B^{m+1}\sum_{N= N_0 + \lfloor \Ups \log B\rfloor} \exp\Bigl[\Bigl(\frac{\wtil{C}\log k}{k} + \frac{m \log L}{k} - \eta \log|\theta_{m-q}|\Bigr)N \Bigr] 
$$
%& \le &
%\end{eqnarray*}
Now, choosing $k$ sufficiently large, in such a way that 
$
k^{-1}({\wtil{C}\log k} + {m \log L})< \frac{\eta}{2}\log|\theta_{m-q}|\,,
$
we obtain
\begin{eqnarray*}
\Hk^\eta(E_k(\Ups)) & \le & O_\zeta(1) \cdot \sum_{B=2}^\infty B^{m+1} \sum_{N= N_0 + \lfloor \Ups \log B\rfloor} \exp(- N\eta \log|\theta_{m-q}|/2) \\
& \le & O_\zeta(1) \cdot \sum_{B=2}^\infty B^{m+1} \cdot B^{-\Ups\cdot \eta \log|\theta_{m-q}|/2}\exp(- N_0\eta \log|\theta_{m-q}|/2).
\end{eqnarray*}
Choosing $\Ups$ sufficiently large, in such a way that $\Ups\eta \log|\theta_{m-q}|/2 > m+2$, we obtain a convergent series in $B$, and since the inequality holds for any $N_0$, we will get 
$\Hk^\eta(E_k(\Ups))=0$ for the appropriate $k$ and $\Ups$.
The proof of Proposition~\ref{prop-EKvar} is complete.
\end{proof}

\subsection{Proof of Proposition~\ref{prop-holder2}}
%\begin{proof}
Choose $k\in \Nat$ and $\Ups>0$ in such a way that $\dim_H(E_k(\Ups)) <1$, which is possible by Proposition~\ref{prop-EKvar}.
Let
$$
\Ek_k(\Ups):= \Fk^{-1} P_{m-q}^{-1}(E_k(\Ups)).
$$
Note that $P_{m-q}^{-1}(E_k(\Ups))$ is the direct product of $E_k(\Ups)$ with a real $q$-dimensional linear space, hence
$\dim_H(\Ek_k(\Ups))=\dim_H(E_k(\Ups))+q<1+q$.  We want to show that $\Ek_k(\Ups)$ is the desired exceptional set in Proposition~\ref{prop-holder2}. To this end, let $\vec{s}\in \Delta^{m-1}\setminus \Ek_k(\Ups)$. Consider the coefficients $b_j$ defined by (\ref{coord}), so that (\ref{coord2}) holds; then $$\Fk(\vec{s}) = (1, b_2/b_1,\ldots, b_m/b_1)=:(a_1,\ldots,a_{m}).$$
Observe that
$$
b_1 = \langle \vec{e}_1,\vec{s}\rangle \, \langle \vec{\ell}(v),\vec{e_1^*}\rangle\in [C_3^{-1},C_3],
$$
where $C_3>1$ depends only on $\zeta$ and $v$, since 
$$
\min_j(\vec{e}_1)_j\le \langle \vec{e}_1,\vec{s}\rangle\le \max_j(\vec{e}_1)_j\ \ \mbox{for all}\ \vec{s} \in \Delta^{m-1}.
$$
Let $\om\ne 0$. By symmetry, we can assume that $\om>0$. Let $B\ge 2$ be minimal such that $[C_3^{-1}\om, C_3 \om] \subset [B^{-1}, B]$, that is,
\be \label{def-B}
B = \lceil C_3 \max(\om,\om^{-1})\rceil.
\ee
From $\vec{s}\not\in \Ek_k(\Ups)$ it follows that 
$$(a_1,\ldots,a_{m-q}) \not\in E_k(\Ups),$$ hence there exists $N_0=N_0(\vec{s})\in \Nat$ such that
$$(a_1,\ldots,a_{m-q}) \not \in E_k^N(B),\ \ \mbox{for all}\ N\ge N_0+\lfloor \Ups\log B\rfloor.
$$
By the definition of $E_k^N(B)$ and (\ref{coord2}), rescaling by $b_1\in [C_3^{-1},C_3]$, we obtain that there are at least $\lfloor N/k\rfloor$ integers $n\in [1,N]$ for 
which $$\|\om|\zeta^n(v)|_{\vec{s}}\|\ge \rho,$$ hence
$$
 \prod_{n=1}^{N} (1-c_1\|\om|\zeta^n(v)|_{\vec{s}}\|^2)\le (1-c_1\rho^2)^{\lfloor N/k\rfloor},\ \ \mbox{for all}\ N\ge N_0 + \lfloor \Ups\log B\rfloor.
$$
Combined with Proposition~\ref{prop-Dioph0}(ii), this estimate implies, for all $\om>0$: 
\be \label{imp}
\sup\left\{ \bigl|S_R^{(x,t)}(f,\om)\bigr|:\ (x,t)\in\Xxi^{\vec{s}}\right\}\le C'\|f\|_L\cdot R\cdot(1-c_1\rho^2)^{\lfloor(\log_\theta R - C_2)/k\rfloor} \le C''\|f\|_L\cdot R^\alpha,
\ee
where
$$
\alpha = 1+\frac{\log_\theta(1-c_1 \rho^2)}{k}\in (0,1),
$$
as long as $\log_\theta R- C_2 > N_0 + \Ups \log B$, 
 which can be written as $$R > C_4 \cdot B^{\Ups \log \theta} \ge C_4'\cdot \max\{|\om|^Z, |\om|^{-Z}\},\ \ 
Z = \Ups\log\theta,$$
for some constants $C_4, C_4'$ depending only on $\zeta$ and $\vec{s}$,
in view of (\ref{def-B}). When $|\om|>1$ and $R < C_4' |\om|^Z$, we simply ignore the product term in (\ref{eq-matrest10}) and write
$$
\sup\left\{ \bigl|S_R^{(x,t)}(f,\om)\bigr|:\ (x,t)\in\Xxi^{\vec{s}}\right\}\le C'\|f\|_L\cdot R\cdot |\om|^{-1} < C'{C'_4}^{1/Z}\|f\|_L\cdot R^{1-1/Z}.
$$
Now the claim of Proposition~\ref{prop-holder2}, with $\wtil{\gam} = 2-2\beta$, where $\beta = \max\{\alpha, 1-1/Z\}$, follows from Lemma~\ref{lem-varr}, with $\Omega(r)=r^{\wtil{\gam}}$.\qed

%%%%%%%%%%%%%%%%%%%%%%%%%%%%%%%%%%%%%%%%%%%%%%%%%%%%%%

\section{Proof of Theorem~\ref{th-holder1}}

In order to prove Theorem~\ref{th-holder1}, we need an estimate of the spectral measure at zero. 

\begin{prop} \label{prop-zero}
Let $\zeta$ be a primitive substitution, with the second eigenvalue of the substitution matrix equal to $\theta_2$,
with $|\theta_2|>1$,
and the total number of eigenvalues equal to $|\theta_2|$ in absolute value is $\alpha+1$. For any $\vec{s}\in \Delta^{m-1}$ let $f$ by a cylindrical function on $\Xxi^{\vec{s}}$ such that
$\int f\,d\wtil{\mu}=0$ and $\sig_f$ the corresponding spectral measure. Then
$$
\sig_f([-r,r]) = O_{\zeta,\vec{s}} (1) \cdot (\log(1/r))^{2\alpha} r^{2-2\beta},\ \ r>0,\ \ \mbox{where}\ \ \beta = \log_\theta|\theta_2|.
$$
\end{prop}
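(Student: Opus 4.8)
The plan is to estimate the twisted Birkhoff integral $S_R^{(x,t)}(f,\om)$ for $\om$ near zero and then feed the result into Lemma~\ref{lem-varr}. The starting point is Proposition~\ref{prop-Dioph0}(ii), which bounds $|S_R^{(x,t)}(f,\om)|$ by $C'\|f\|_\infty\cdot\min\{1,|\om|^{-1}\}\cdot R\cdot\prod_{k=0}^{\lfloor\log_\theta R-C_2\rfloor}(1-c_1\|\om|\zeta^k(v)|_{\vec{s}}\|^2)$. For $\om$ small, the quantities $\om|\zeta^k(v)|_{\vec{s}}$ are themselves small for the first several values of $k$, so $\|\om|\zeta^k(v)|_{\vec{s}}\|=|\om|\cdot|\zeta^k(v)|_{\vec{s}}$ as long as this stays below $1/2$, and using $|\zeta^k(v)|_{\vec{s}}=\sum_j b_j\theta_j^k$ with $b_1>0$ (so $|\zeta^k(v)|_{\vec{s}}\asymp_{\zeta,\vec{s}}\theta^k$), this holds precisely for $k$ up to roughly $\log_\theta(1/|\om|)$. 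The key observation is that on this initial range, $1-c_1\|\om|\zeta^k(v)|_{\vec{s}}\|^2\le\exp(-c_1\om^2\theta^{2k}/O_{\zeta,\vec{s}}(1))$, and since these exponents form (up to constants) a geometric progression dominated by its last term $\om^2\theta^{2k_0}\asymp 1$, the product over the initial range is bounded by a constant. That alone is not enough, however: this only recovers the trivial bound $|S_R|\le O(R)$ and does not produce decay.

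The refinement needed — and the reason for the appearance of $\theta_2$ and the multiplicity $\alpha$ — is that once $k$ exceeds $\log_\theta(1/|\om|)$ we must exploit the \emph{second} eigenvalue, exactly as in the proof of the H\"older estimate, but here starting the relevant window at scale $|\om|^{-1}$ rather than at scale $1$. I would compare $|\zeta^{k+1}(v)|_{\vec{s}}$ with $\theta\cdot|\zeta^k(v)|_{\vec{s}}$; the difference is governed by the subdominant terms $\sum_{j\ge 2}b_j\theta_j^k$, which is of order $|\theta_2|^k$ up to a polynomial factor $k^\alpha$ coming from the multiplicity $\alpha+1$ of eigenvalues of modulus $|\theta_2|$ (this is where the $(\log(1/r))^{2\alpha}$ factor enters). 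Running the Erd\H{o}s--Kahane-type counting of Proposition~\ref{prop-EKvar}, but now applied on the interval of $\om$ comparable to a fixed small scale and over the range of $k$ between $\log_\theta(1/|\om|)$ and $\log_\theta R$, one gets that for $\vec{s}$ outside an exceptional set the number of $k$ in this range with $\|\om|\zeta^k(v)|_{\vec{s}}\|\ge\rho$ is a definite positive fraction of the total. Since the mean-zero hypothesis $\int f\,d\wtil\mu=0$ is what lets us handle the point $\om=0$ — the spectral measure has no atom there, and more precisely the $\widehat\psi_a$ combine so that the $\om=0$ contribution vanishes to the right order, cf. Adamczewski's estimate \cite{Adam} — we obtain $|S_R^{(x,t)}(f,\om)|\le O_{\zeta,\vec{s}}(1)\cdot(\log(1/|\om|))^\alpha\cdot R\cdot(1-c_1\rho^2)^{c\,(\log_\theta R-\log_\theta(1/|\om|))}$ for $R\ge|\om|^{-O(1)}$, which rearranges to $|S_R|\le O_{\zeta,\vec{s}}(1)\cdot(\log(1/|\om|))^\alpha\cdot|\om|^{-(1-\beta)}\cdot R^{\beta}$ with $\beta=\log_\theta|\theta_2|$.

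With this bound in hand, I would apply Lemma~\ref{lem-varr} with $\om$ in a window of size $r$ around $0$, taking $R_0$ of order $r^{-1}$: the hypothesis (\ref{eq-SR}) holds with $\Om(1/R)$ of the form $(\log(1/r))^{2\alpha}r^{2-2\beta}$ after absorbing the $|\om|^{-2(1-\beta)}$ factor (valid since $|\om|\le r$ on the window, and $1-\beta>0$) and the logarithmic factor, yielding $\sig_f([-r,r])\le O_{\zeta,\vec{s}}(1)\cdot(\log(1/r))^{2\alpha}r^{2-2\beta}$. The main obstacle I anticipate is the bookkeeping in the second paragraph: one must run the Erd\H{o}s--Kahane argument not on a fixed-size frequency interval but with the relevant scale tied to $|\om|$, carefully tracking how the exceptional set is the \emph{same} set $\Ek_k(\Ups)$ already removed in Proposition~\ref{prop-holder2} (so that no new exceptional set appears), and correctly extracting the polynomial-in-$\log(1/r)$ loss from the eigenvalue multiplicity rather than an unwanted polynomial-in-$1/r$ loss. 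The interplay between the ``trivial'' initial range $k\lesssim\log_\theta(1/|\om|)$ (where the product is $O(1)$) and the ``productive'' range beyond it is the delicate point; everything else is a quantitative reprise of arguments already in the excerpt together with the classical zero-frequency estimate of \cite{Adam}.
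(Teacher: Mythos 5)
Your approach misidentifies the mechanism behind this estimate, and the resulting argument has a gap that cannot be repaired along the lines you propose. The paper's proof never touches nonzero frequencies: it applies Lemma~\ref{lem-varr} once, at the single point $\om=0$, so what is needed is a bound on the \emph{untwisted} Birkhoff integral $S_R^{(x,t)}(f,0)$. At $\om=0$ every factor of the product in Proposition~\ref{prop-Dioph0}(ii) equals $1$ (since $\|\om|\zeta^k(v)|_{\vec{s}}\|=0$), so the twisted-integral product and the Erd\H{o}s--Kahane counting give exactly nothing here; the decay comes instead from cancellation in Birkhoff sums of a mean-zero function, i.e.\ from a deviation-of-ergodic-averages estimate. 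Concretely, after the harmless reductions $S_R^{(x,t)}(f,0)\approx S_{R'}^{(x,0)}(f,0)$ with $R'=|x[0,N-1]|_{\vec{s}}$, one has $S_{R'}^{(x,0)}(f,0)=S_N^x(F)$ for the mean-zero cylinder function $F=\sum_{a}\widehat{\psi}_a(0)\One_{[a]}$ on the subshift, and Adamczewski's symbolic discrepancy theorem (Theorem~\ref{th-Adam}) gives $S_N^x(F)=O_{\zeta,\vec{s}}(1)\cdot(\log_\theta N)^\alpha N^\beta$ --- this is where $\theta_2$ and the multiplicity $\alpha$ actually enter, not through the subdominant terms of $|\zeta^k(v)|_{\vec{s}}$ in the product formula. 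Lemma~\ref{lem-varr} with $\Om(r)=(\log(1/r))^{2\alpha}r^{2-2\beta}$ then finishes the proof. You cite \cite{Adam} only parenthetically and never actually use it to produce the bound.

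Two further concrete problems with your route. First, your intermediate bound $|S_R|\le O(1)(\log(1/|\om|))^\alpha|\om|^{-(1-\beta)}R^\beta$ degenerates as $\om\to 0$ (it is infinite at $\om=0$), and the ``absorption'' step is backwards: since $|\om|\le r$ and $1-\beta>0$, the factor $|\om|^{-2(1-\beta)}$ is at least $r^{-2(1-\beta)}$, which cancels the gain $r^{2-2\beta}$ rather than being absorbed into it. Moreover Lemma~\ref{lem-varr} needs a bound at a fixed frequency valid for all large $R$, and to control $\sig_f([-r,r])$ that frequency must be $0$; applying it at points of a ``window'' around $0$ does not cover the interval $[-r,r]$ at the claimed scale. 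Second, any use of the Erd\H{o}s--Kahane argument introduces an exceptional set of roof vectors, whereas Proposition~\ref{prop-zero} is stated --- and used in the gluing step of Theorem~\ref{th-holder1} --- for \emph{every} $\vec{s}\in\Delta^{m-1}$; the discrepancy-based proof has no exceptional set.
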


This proposition is a consequence of a result, essentially due to Adamczewski, on the symbolic discrepancy for substitutions. It is stated in the context of $\Z$-actions. Let 
$F = \sum_{a\in \Ak} d_a \One_{[a]}$ and consider the Birkhoff sum
\be \label{Birsum}
S_N^x(F) := \sum_{n=0}^{N-1} F(T_\zeta^n x) = \sum_{n=0}^{N-1} d_{x_n}, \ \ \ \mbox{for}\ \ x\in X_\zeta.
\ee

\begin{theorem} \label{th-Adam} Let $\zeta$ be a primitive substitution, satisfying the assumptions of Proposition~\ref{prop-zero}.
Suppose that $F = \sum_{a\in \Ak} d_a \One_{[a]}$, with $\int F\,d\mu=0$. Then the following holds for any $x\in X_\zeta$:
\be \label{eq-disc}
S_N^{x}(F) = O_{\zeta,\vec{s}} (1)\cdot ((\log_\theta N)^\alpha N^\beta),\ \ \mbox{with}\ \  \beta = \log_\theta |\theta_2|.
\ee
\end{theorem}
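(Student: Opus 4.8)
The plan is to deduce Theorem~\ref{th-Adam} from the combinatorial structure of the substitution, following the method of Adamczewski on symbolic discrepancy. First I would reduce the Birkhoff sum $S_N^x(F)$ over a prefix of $x\in X_\zeta$ to a controlled sum of contributions coming from the hierarchical $\zeta$-decomposition of that prefix. Concretely, since $\zeta$ is primitive and aperiodic, every point $x$ has a (essentially unique, up to bounded ambiguity) desubstitution tower: the prefix $x[0,N-1]$ can be written, up to bounded-length error terms at each level, as a concatenation of blocks of the form $\zeta^k(a)$ with $a\in\Ak$, where at level $k$ at most $O_\zeta(1)$ such blocks occur, and the top level $k$ is $O(\log_\theta N)$. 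This is the standard ``greedy'' or ``odometer'' decomposition of a prefix in a primitive substitution system.

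The key algebraic input is that $S_N^x(F)$ is additive over concatenation and that $S_{|\zeta^k(a)|}(F)$ applied to the block $\zeta^k(a)$ equals $\langle \vec{d},\vec{\ell}(\zeta^k(a))\rangle = \langle \vec{d}, \Sf^k \vec{\ell}(a)\rangle = \langle (\Sf^t)^k \vec{d}, \vec{\ell}(a)\rangle$. The mean-zero hypothesis $\int F\,d\mu=0$ says exactly that $\vec d$ is orthogonal to the Perron--Frobenius eigenvector of $\Sf^t$, i.e.\ $\vec d$ lies in the span of the generalized eigenspaces of $\Sf^t$ for the eigenvalues $\theta_2,\dots,\theta_m$ (here diagonalizability over $\C$ holds, but I would keep the argument robust to Jordan blocks in case of multiplicities among $|\theta_j|$). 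Hence $\|(\Sf^t)^k\vec d\| = O((\log k$-type factor$)\cdot|\theta_2|^k)$; in the diagonalizable case this is simply $O(|\theta_2|^k)$ when there is one dominant subeigenvalue, and $O(k^\alpha |\theta_2|^k)$ when $\alpha+1$ eigenvalues share the modulus $|\theta_2|$ — the polynomial factor $k^\alpha$ is exactly the source of the $(\log_\theta N)^\alpha$ in \eqref{eq-disc}. Summing these bounds over the $O(1)$ blocks at each level $k=0,1,\dots,O(\log_\theta N)$ and accounting for the bounded error words at each level (whose contribution is $O_\zeta(1)$ per level, hence $O(\log N)$ in total, which is absorbed) gives $|S_N^x(F)| = O_{\zeta}(1)\cdot\sum_{k=0}^{O(\log_\theta N)} k^\alpha |\theta_2|^k = O_\zeta(1)\cdot (\log_\theta N)^\alpha |\theta_2|^{O(\log_\theta N)} = O_\zeta(1)\cdot (\log_\theta N)^\alpha N^{\log_\theta|\theta_2|}$, which is precisely \eqref{eq-disc} with $\beta = \log_\theta|\theta_2|$.

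I expect the main obstacle to be making the hierarchical decomposition of an arbitrary prefix $x[0,N-1]$ rigorous and uniform in $x$: one must show that recognizability (or at least a bounded-to-one desubstitution) holds, that the number of level-$k$ blocks plus boundary words is $O_\zeta(1)$ independent of $N$ and $x$, and that the geometric lengths $|\zeta^k(a)|$ grow like $\theta^k$ with uniform constants so that the top level is genuinely $\log_\theta N + O(1)$. Aperiodicity and primitivity are what make this work (Mossé's recognizability theorem, or a direct elementary argument for the one-sided prefix decomposition), but the bookkeeping of the $O(1)$ error blocks at each level — ensuring their total Birkhoff contribution is genuinely lower order rather than accumulating — is the delicate point. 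Once that combinatorial skeleton is in place, the passage to Proposition~\ref{prop-zero} is routine: one transfers the discrepancy bound for the $\Z$-action to the twisted/untwisted Birkhoff integral of the suspension flow at frequency $\om=0$ via \eqref{SR1}--type identities (at $\om=0$ the phase disappears and $S_R^{(x,0)}(f,0)$ is essentially $\widehat\psi(0)$ times a population-vector sum, i.e.\ a Birkhoff sum of a mean-zero step function), and then applies Lemma~\ref{lem-varr} with $\Om(r) = (\log(1/r))^{2\alpha} r^{2-2\beta}$ to convert the growth bound on $|S_R|$ into the stated bound on $\sig_f([-r,r])$.
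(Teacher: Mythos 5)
Your outline is mathematically sound, but it takes a different route from the paper, which does not actually prove Theorem~\ref{th-Adam}: it cites Adamczewski \cite{Adam} for the case where $x$ is a fixed point of $\zeta$, and invokes Lemma~3.2 and Proposition~3.3 of \cite{BuSo1} to extend the bound to arbitrary $x\in X_\zeta$. What you propose is essentially a reconstruction of Adamczewski's own argument: a Dumont--Thomas-type prefix decomposition into blocks $\zeta^k(a)$, additivity of the Birkhoff sum over concatenation, and the identity $\langle\vec{d},\Sf^k\vec{\ell}(a)\rangle=\langle(\Sf^t)^k\vec{d},\vec{\ell}(a)\rangle$ combined with the observation that $\int F\,d\mu=0$ places $\vec{d}$ in the $\Sf^t$-invariant complement of the Perron--Frobenius direction. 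The delicate points you flag --- uniformity of the hierarchical decomposition over all $x\in X_\zeta$ rather than just fixed points, and the bookkeeping of the $O_\zeta(1)$ boundary words per level --- are exactly what the citations to \cite{BuSo1} are there to supply, so your instinct about where the real work lies is correct. Your self-contained route buys independence from the cited sources; the paper's route buys brevity.

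One imprecision worth correcting: you attribute the factor $k^\alpha$ in $\|(\Sf^t)^k\vec{d}\|=O(k^\alpha|\theta_2|^k)$ to the presence of $\alpha+1$ eigenvalues of modulus $|\theta_2|$ \emph{in the diagonalizable case}. If $\Sf$ is diagonalizable, each such eigenvalue contributes only $O(|\theta_2|^k)$ and no polynomial factor appears; the factor $k^\alpha$ comes from Jordan blocks, and the statement with $\alpha+1$ counted eigenvalues is a worst-case bound covering a single Jordan block of that size. Since you hedge by saying you would keep the argument robust to Jordan blocks, this does not break the proof, but the mechanism should be stated accurately. Finally, note that your closing paragraph (passing from the discrepancy bound to $\sig_f([-r,r])$ at $\om=0$ via Lemma~\ref{lem-varr}) is the proof of Proposition~\ref{prop-zero}, not of Theorem~\ref{th-Adam}; there it does match the paper's argument.
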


Adamczewski \cite{Adam} obtained (\ref{eq-disc}) for the case when $x$ is a fixed point of the substitution, but the extension to the case of general $x\in X_\zeta$ follows from
\cite[Lemma 3.2 ]{BuSo1} and \cite[Proposition 3.3]{BuSo1}.

\begin{proof}[Proof of Proposition~\ref{prop-zero}.]
We will use Lemma~\ref{lem-varr} for $\om=0$. We have $f = \sum_{a\in \Ak} f_a$, as in (\ref{fcyl}).
Clearly,
$$
|S_R^{(x,t)}(f,0) - S_R^{(x,0)}(f,0)| \le \|s\|_\infty \cdot \|f\|_\infty \le \|f\|_\infty.
$$
We can further find $N\in \Nat$ such that
$$
\left|x[0,N-1]\right|_{\vec{s}} \le R \le \left|x[0,N]\right|_{\vec{s}}.
$$
Then
$$
|S_R^{(x,0)}(f,0) - S_{R'}^{(x,0)}(f,0)| \le |R-R'|\cdot \|f\|_\infty \le \|s\|_\infty \cdot \|f\|_\infty \le \|f\|_\infty,
$$
where $R' = \left|x[0,N-1]\right|_{\vec{s}}$.
Thus it suffices to estimate $|S_{R'}^{(x,0)}(f,0)|$ from above. We have
$$
S_{R'}^{(x,0)}(f,0) = \sum_{a\in \Ak} \widehat{\psi}_a(0) \cdot \Phi_a^{\vec{s}} (x[0,N-1],0)
$$
from (\ref{SR1}). Observe that 
$$
\Phi_a^{\vec{s}} (x[0,N-1],0)= S_N^x(\One_{[a]}),
$$
according to (\ref{def-Phi3}) and (\ref{Birsum}), and $\widehat{\psi}_a(0) = \int_0^{s_a} \psi_a(t)\,dt$. Thus, $\int f \,d\wtil{\mu} = 0$ is equivalent to $\int F\,d\mu = 0$, where
$$F = \sum_{a\in\Ak} \widehat{\psi}_a(0)\cdot \One_{[a]},$$
by the definition of the measure $\wtil{\mu}$ on $\Xxi^{\vec{s}}$.
Since $R\ge N\cdot \min_{a\in \Ak} s_a$, we obtain from Theorem~\ref{th-Adam}
that 
$
S_R^{(x,t)}=O_{\zeta,\vec{s}} (1)\cdot ((\log_\theta R)^\alpha R^\beta),\ \ \mbox{with}\ \  \beta = \log_\theta |\theta_2|,
$
and Proposition~\ref{prop-zero} follows from Lemma~\ref{lem-varr}.
\end{proof}

\begin{proof}[Proof of Theorem~\ref{th-holder1}] It remains to ``glue'' Proposition~\ref{prop-holder2} with Proposition~\ref{prop-zero}. Fix $\wtil{\beta} \in (\beta,1)$. By Proposition~\ref{prop-zero},
we have $\sig_f([-r,r]) \le O_{\zeta,\vec{s}}(1) \cdot r^{2-2\wtil{\beta}}$, for any $r>0$. By Proposition~\ref{prop-holder2}, $\sig_f([\om-r,\om+r]) \le O_\zeta(1)\cdot  |\om|^{\wtil{\gamma}}$ for $r\le r_0 \cdot |\om|^Z$, where $r_0 = r_0(\vec{s})$. Keeping in mind that $Z>1$ without loss of generality, we obtain, for $r> r_0\cdot |\om|^Z$, the estimates:
\begin{multline}
\sig_f([\om-r,\om+r])  \le  \sig_f([-|\om|-r,|\om|+r])
                                  \le  O_{\zeta,\vec{s}}(1) \cdot (|\om|+r)^{2-2\wtil{\beta}} \\
                                                                           \le  O_{\zeta,\vec{s}}(1) \cdot (r^{1/Z}r_0^{-1/Z}+r)^{2-2\wtil{\beta}} 
                                                                             \le  O_{\zeta,\vec{s}}(1) \cdot r^{(2-2\wtil{\beta})/Z}.
\end{multline}
Now (\ref{holder1}) follows, with $\gam = \min\{\wtil{\gam},2-2\wtil{\beta})/Z\}$, and Theorem~\ref{th-holder1} is proved completely.
\end{proof}

%%%%%%%%%%%%%%%%%%%%%%%%%%%%%%%%%%%%%%%%%%%%%%%%%%%%%%%%%%%%

%\section{Concluding remarks}

{\bf Acknowledgements.} We are  deeply grateful to Jon Chaika for asking the question that led to this work  and for useful discussions.
The research of A. Bufetov on this project has received funding from the European Research Council (ERC) under the European Union's Horizon 2020 research and innovation programme under grant agreement No 647133 (ICHAOS).
It was also supported by the  Grant MD 5991.2016.1 of the President of the Russian Federation, by  the Russian Academic Excellence Project `5-100' and by the Gabriel Lam\'e Chair  at the Chebyshev Laboratory of the SPbSU, a joint initiative of the French Embassy in the Russian Federation and the Saint-Petersburg State University.  
B. Solomyak  has been supported  by Israel Science Foundation (grant 396/15).

%%%%%%%%%%%%%%%%%%%%%%%%%%%%%%%%%%%%%%%%%%%%%%%%%%%%%%%%%%%%


\begin{thebibliography}{99}
\bibitem{Adam} Boris Adamczewski, Symbolic discrepancy and self-similar dynamics
{\em Annales Inst. Fourier},  {\bf 54}(7), 2201--2234.

\bibitem{ABBLS} Akiyama, S.; Barge, M.; Berth{\'e}, V.; Lee, J.-Y.; Siegel, A. On the Pisot substitution conjecture. Mathematics of aperiodic order, 33 --72, Progr. Math., 309, Birkhaeuser/Springer, Basel, 2015.

\bibitem{BBH} Xavier Bressaud, Alexander I. Bufetov, Pascal Hubert,
Deviation of ergodic averages for substitution dynamical systems with eigenvalues of modulus 1
{\em Proceedings of the London Mathematical Society},  {\bf 109} (2), 483 --522.

\bibitem{BD} M. Barge, B. Diamond,  Coincidence for substitutions of Pisot type, 
{\em Bulletin SMF} {\bf 130} (2002),
591 -- 626.

\bibitem{BuSo1} Alexander I. Bufetov and Boris Solomyak,
On the modulus of continuity for spectral measures in substitution dynamics, {\em Advances in Mathematics} {\bf 260} (2014), 84--129.

\bibitem{CSa} A. Clark and L. Sadun, When size matters: subshifts and their related tiling spaces, {\em Ergodic Theory Dynam.\ Systems} {\bf 23} (2003), 1043--1057.

\bibitem{Erd} Erd\H{o}s, Paul.
On the smoothness properties of Bernoulli convolutions.
{\em Amer.\ J.\ Math.} {\bf 62} (1940), 180--186.

\bibitem{FMN} S. Ferenczi, C. Mauduit, A. Nogueira, 
Substitution dynamical systems: algebraic characterization of eigenvalues, {\em Annales scientifiques de l'\'Ecole Normale Sup\'erieure,  S\'er.\ 4}, {\bf 29} no.\ 4 (1996), 519--533. 

\bibitem{Siegel} Pytheas N. Fogg,  {\em Substitutions in dynamics, arithmetics and combinatorics}, Edited by V. Berth\'e, S. Ferenczi, C. Mauduit and A. Siegel, in: Lecture Notes in Math., vol.\ 1794, Springer, Berlin, 2002.

\bibitem{HS} M. Hollander and B. Solomyak, Two-symbol Pisot substitutions have pure discrete spectrum,
{\em Ergodic Theory  Dynam.\ Systems}, {\bf  23} (02), 533 -- 540.

\bibitem{Kahane} J.\ P.\ Kahane, Sur la distribution de certaines series aleatoires, {\em
Colloque Th.\ Nombres [1969, Bordeaux],  Bull.\ Soc.\ math.\ France}, 
 M\'{e}moire 25 (1971), 119--122.
 
\bibitem{Queff} M. Queffelec, {\em Substitution Dynamical Systems - Spectral Analysis}. Second edition. Lecture Notes in Math., vol.\ 1294, Springer, Berlin, 2010.


\bibitem{solomyak}
Solomyak, Boris. On the spectral theory of adic transformations.
Representation theory and dynamical systems,  217--230, {\em Adv. Soviet Math.} {\bf 9},
Amer. Math. Soc., Providence, RI, 1992.

\bibitem{SolTil} B.~Solomyak, Dynamics of self-similar tilings, {\em Ergodic Theory Dynam.\ Systems} \textbf{17} (1997), no. 3, 695 -- 738.


\bibitem{Str90} 
 Strichartz, Robert S., Fourier asymptotics of fractal measures, {\em  J. Funct. Anal. } {\bf 89} (1990), no. 1, 154--187.
\end{thebibliography}
\end{document}